\allowdisplaybreaks \numberwithin{equation}{section}
\newtheorem{theorem}{Theorem}[section]
\newtheorem{proposition}[theorem]{Proposition}
\newtheorem{lemma}[theorem]{Lemma}
\newtheorem{claim}[theorem]{Claim}
\theoremstyle{definition}
\newtheorem{definition}[theorem]{Definition}
\newtheorem{example}[theorem]{Example}
\newtheorem{remark}[theorem]{Remark}
\newcommand{\ord}{\mathrm{ord}}
\newcommand{\Aut}{\mathrm{Aut}}
\newcommand{\mult}{\mathrm{mult}}
\newcommand{\codim}{\mathrm{codim}}
\newcommand{\Sing}{\mathrm{Sing}}
\newcommand{\Hilb}{\mathrm{Hilb}}
\newcommand{\Supp}{\mathrm{Supp}}
\definecolor{red}{rgb}{0.8, 0.2, 0.0}
\begin{document}

\title[On large theta-characteristics with prescribed vanishing]{On large theta-characteristics with prescribed vanishing}
\author{Edoardo Ballico}
\address{Edoardo Ballico, Dipartimento di Matematica, Universit\`a degli Studi di Trento, Via Sommarive 14, 38123 Povo (TN) - Italy}
\email{edoardo.ballico@unitn.it}
\author{Francesco Bastianelli}
\address{Francesco Bastianelli, Dipartimento di Matematica e Fisica, Universit\`a degli Studi Roma Tre, Largo San Leonardo Murialdo 1, 00146 Roma - Italy}
\email{bastiane@mat.uniroma3.it}
\thanks{This work was partially supported by FIRB 2012 \emph{``Spazi di moduli e applicazioni''}; PRIN 2010-11 \emph{``Geometry of algebraic varieties''}; FAR 2010 (Milano-Bicocca) \emph{``Geometria e Topologia''}; INdAM (GNSAGA)}
\author{Luca Benzo}
\address{Luca Benzo, Dipartimento di Scienze Matematiche "Giuseppe Luigi Lagrange", Politecnico di Torino, Corso Duca degli Abruzzi 24, 10129 Torino - Italy}
\email{luca.benzo@polito.it}

\begin{abstract}
Let $C$ be a smooth projective curve of genus $g\geq 2$.
Fix an integer $r\geq 0$, and let $\underline{k}=(k_1,\ldots,k_n)$ be a sequence of positive integers with $\sum_{i=1}^n k_i =g-1$.
In this paper, we study $n$-pointed curves $(C,p_1,\ldots,p_n)$ such that the line bundle $L:=O_C\left(\sum_{i=1}^n k_i p_i\right)$ is a theta-characteristic with $h^0\left(C,L\right)\geq r+1$ and $h^0\left(C,L\right)\equiv r+1 \,\mathrm{(mod.\,2)}$.
We prove that they describe a sublocus $\mathcal{G}^r_g(\underline{k})$ of $\mathcal{M}_{g,n}$ having codimension at most $g-1+\frac{r(r-1)}{2}$.
Moreover, for any $r\geq 0$, $\underline{k}$ as above, and $g$ greater than an explicit integer $g(r)$ depending on $r$, we present irreducible components of $\mathcal{G}^r_g(\underline{k})$ attaining the maximal codimension in $\mathcal{M}_{g,n}$, so that the bound turns out to be sharp.
\end{abstract}

\maketitle
\section{Introduction}\label{section INTRODUCTION}

The study of theta-characteristics on algebraic curves has been developed along various directions concerning, for instance, the dimension of spaces of global sections, or the geometry of effective divisors in the associated linear series.
Consider a smooth complex projective curve $C$ of genus $g\geq 2$, and let $\mathcal{M}_g$ be the moduli space of smooth curves of genus $g$.
We recall that a theta-characteristic $L$ is a line bundle on $C$ such that $L^{\otimes 2}\cong \omega_C$, and its \emph{parity} is the residue modulo 2 of the dimension $h^0\left(C, L\right)$ of the space of global sections.

In the seminal paper \cite{Mu}, Mumford introduced a purely algebraic approach to theta-characteristics, and he proved that the parity is invariant under flat deformations of pairs $(C,L)$.
Along these lines, Harris \cite{Ha} focused on the loci $\mathcal{M}_g^r$ in $\mathcal{M}_g$ of curves admitting a \emph{large} theta-characteristic $L$, that is $h^0(C,L)\geq r+1$ and $h^0(C,L)\equiv r+1 (\text{mod } 2)$ for some fixed integer $r\geq 0$.
In particular, he showed that the codimension of each irreducible component of $\mathcal{M}_g^r$ is at most $\frac{r(r+1)}{2}$.
The geometry of these loci has been thoroughly investigated (see e.g. \cite{Te, Na, Col, Fa1, Fa2}), and recently, Harris' bound has been proved to be sharp for any value of $r\geq 0$ and $g\geq g(r)$, where $g(r)$ is an integer depending on $r$ (cf. \cite{Be1, Be2}).

On the other hand, Kontsevich and Zorich  \cite{KZ} set their analysis of theta-characteristics in the moduli space of abelian differentials, which parameterizes isomorphism classes of pairs $(C,\omega)$ consisting of a smooth curve $C$ of genus $g$ endowed with a non-zero holomorphic form $\omega\in H^0(C,\omega_C)$.
Given any partition $\underline{k}=(k_1,\ldots,k_n)$ of the integer $g-1$---i.e. a sequence of integers $k_1\geq\dots\geq k_n>0$ such that $\sum_{i=1}^n k_i=g-1$---they obtain relevant results on the number of connected components of the locus $\mathcal{H}_g(2\underline{k})$ described by pairs $(C,\omega)$ such that $\omega$ vanishes along a divisor of the form $2(k_1p_1+\dots+k_n p_n)$, so that $\mathcal{O}_C\left(\sum_{i=1}^n k_ip_i\right)$ is a theta-characteristic on $C$.

\smallskip
In this paper we consider both the viewpoints above at once, and we focus on the subvarieties of the moduli space $\mathcal{M}_{g,n}$ described by $n$-pointed curves of genus $g$ admitting a large theta-characteristic with a global section vanishing with prescribed multiplicities at the marked points.
More precisely, given an integer $r\geq 0$ and a partition $\underline{k}=(k_1,\ldots,k_n)$ of $g-1$, we are aimed at studying the loci defined as
\begin{equation*}\label{equation G_g^r(k)}
\mathcal{G}_{g}^r(\underline{k}):=\left\{[C,p_1,\ldots,p_n]\in \mathcal{M}_{g,n}\left|
\begin{array}{l}
\displaystyle L:=\mathcal{O}_C\left(\sum_{i=1}^n k_ip_i\right)\textrm{ is a theta-characteristic having}\\ h^0\left(C,L\right)\geq r+1
\textrm{ and } h^0\left(C,L\right)\equiv r+1\,(\mathrm{mod}\,2)
\end{array}\right.\right\}.
\end{equation*}
In particular, we achieve a general upper bound governing the codimension of $\mathcal{G}_{g}^r(\underline{k})$ in $\mathcal{M}_{g,n}$.
Furthermore, for any $r\geq 0$ and $g$ greater than an explicit integer $g(r)$ depending on $r$, we present irreducible components of $\mathcal{G}_{g}^r(\underline{k})$ having maximal codimension, so that the bound turns out to be sharp.

Clearly, any locus $\mathcal{G}_{g}^r(\underline{k})$ maps on $\mathcal{M}_g^r$ under the forgetful morphism $\pi_n\colon \mathcal{M}_{g,n}\longrightarrow \mathcal{M}_g$.
In particular, when $\underline{k}=(1,\ldots,1)$, $\mathcal{G}_{g}^r(\underline{k})$ dominates $\mathcal{M}_g^r$, and  $\dim \mathcal{G}_{g}^r(\underline{k})\geq \dim \mathcal{M}^r_g +r$ as the fibre over a general $[C]\in \mathcal{M}_g^r$ is described by the complete linear series $|L|$ associated to the large theta-characteristic on $C$.
On the other hand, if $\underline{k}=(g-1)$, then the subvarieties $\mathcal{G}_{g}^r:=\mathcal{G}_{g}^r(g-1)$ are the \emph{loci of subcanonical points}, which recently gained renewed interest (see e.g. \cite{BP, Bu, CT}).
Finally, it is worth noticing that the description of these subvarieties---or of corresponding loci in different moduli spaces---led to various applications in classical enumerative and projective geometry \cite{Ha}, in differential geometry \cite{Pi, BP}, and in dynamical systems \cite{KZ}.

\smallskip
By means of Harris' bound, we prove the following.
\begin{theorem}\label{theorem BOUND}
Fix integers $r\geq 0$, $g>n>0$, and consider a partition $\underline{k}=(k_1,\ldots,k_n)$ of $g-1$. Then either $\mathcal{G}_{g}^r(\underline{k})$ is empty, or the codimension in ${\mathcal{M}_{g,n}}$  of each irreducible component $\mathcal{Z}$ of $\mathcal{G}_{g}^r(\underline{k})$ satisfies
\begin{equation*}
\codim_{\mathcal{M}_{g,n}} \mathcal{Z}\leq g-1+\frac{r(r-1)}{2}.
\end{equation*}
\end{theorem}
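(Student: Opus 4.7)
The plan is to dominate $\mathcal{G}_g^r(\underline{k})$ by an auxiliary subvariety of the universal relative symmetric product $\mathcal{J}:=\mathcal{C}^{(g-1)}_g\to\mathcal{M}_g$, the moduli space of pairs $(C,D)$ with $D$ an effective divisor of degree $g-1$ on $C$; this space is orbifold-smooth of dimension $4g-4$. Inside $\mathcal{J}$ I introduce the sublocus
\[
\mathcal{T}:=\Bigl\{(C,D)\in\mathcal{J}\,:\,L:=\mathcal{O}_C(D)\text{ is a theta-characteristic with }h^0(C,L)\geq r+1\text{ and the correct parity}\Bigr\}
\]
together with the natural morphism $\mu\colon\mathcal{M}_{g,n}\to\mathcal{J}$, $[C,p_1,\ldots,p_n]\mapsto(C,\sum_i k_ip_i)$. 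Since the divisor $\sum k_ip_i$ recovers the $n$-tuple $(p_1,\ldots,p_n)$ up to permuting indices of equal weight, $\mu$ is finite onto its image $\mathcal{S}_{\underline{k}}\subseteq\mathcal{J}$, and tautologically $\mathcal{G}_g^r(\underline{k})=\mu^{-1}(\mathcal{T})=\mu^{-1}(\mathcal{T}\cap\mathcal{S}_{\underline{k}})$.

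The crucial step is the estimate $\codim_\mathcal{J}\mathcal{T}\leq g-1+\binom{r}{2}$. For this I project $\mathcal{T}\to\widetilde{\mathcal{M}_g^r}$, the theta-characteristic cover of Harris' locus $\mathcal{M}_g^r$, via $(C,D)\mapsto(C,L)$. The fiber over $(C,L)$ is the complete linear series $|L|\cong\mathbb{P}^{h^0(C,L)-1}$, of dimension at least $r$. Since the theta-characteristic cover is étale over $\mathcal{M}_g$, Harris' bound lifts to $\codim\widetilde{\mathcal{M}_g^r}\leq\binom{r+1}{2}$ inside the $(3g-3)$-dimensional ambient $\widetilde{\mathcal{M}_g}$, so each irreducible component of $\widetilde{\mathcal{M}_g^r}$ has dimension at least $3g-3-\binom{r+1}{2}$. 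Adding the fiber dimension $r$ gives $\dim\mathcal{T}\geq 3g-3-\binom{r}{2}$, equivalently the codimension bound above.

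To conclude, I exploit that $\mu$ is finite: each irreducible component $\mathcal{Z}$ of $\mathcal{G}_g^r(\underline{k})$ has the same codimension in $\mathcal{M}_{g,n}$ as its image $\mu(\mathcal{Z})$ has in $\mathcal{S}_{\underline{k}}$, and $\mu(\mathcal{Z})$ is an irreducible component of $\mathcal{T}\cap\mathcal{S}_{\underline{k}}$. The excess-intersection inequality applied in the smooth ambient $\mathcal{J}$ then yields
\[
\codim_{\mathcal{S}_{\underline{k}}}\mu(\mathcal{Z})\leq\codim_\mathcal{J}\mathcal{T}\leq g-1+\binom{r}{2},
\]
which is the desired inequality. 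The main delicate point is to verify the dimension estimate on $\mathcal{T}$ for \emph{every} irreducible component, including those supported in the sublocus where $h^0(L)$ strictly exceeds $r+1$; here the combination of Harris' uniform bound on all components of $\widetilde{\mathcal{M}_g^r}$ with the observation that the fiber of $\mathcal{T}\to\widetilde{\mathcal{M}_g^r}$ has dimension at least $r$ at every point ensures the codimension bound is preserved component by component. A secondary technicality, handled by passing to an étale atlas, is checking the excess-intersection inequality over the orbifold $\mathcal{J}$.
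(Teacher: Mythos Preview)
Your argument is correct and follows essentially the same strategy as the paper: both intersect, inside the relative $(g-1)$-fold symmetric product, the $\underline{k}$-diagonal locus with the locus of divisors lying in the linear series of a large theta-characteristic, and both derive the required dimension estimate from Harris' bound $\codim\mathcal{M}_g^r\leq\binom{r+1}{2}$ together with $\dim|L|\geq r$. The only difference is packaging: the paper carries this out locally over a versal deformation space of the pointed spin curve (which sidesteps the orbifold issues you flag), whereas you work globally over $\mathcal{M}_g$ and invoke an \'etale atlas to justify the excess-intersection inequality.
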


Accordingly, we shall say that an irreducible component $\mathcal{Z}\subset \mathcal{G}_{g}^r(\underline{k})$ has \emph{expected dimension} if it satisfies equality in the latter bound, that is $\dim \mathcal{Z}= 2g-2+n-\frac{r(r-1)}{2}$.
We note that when $\underline{k}=(1,\ldots,1)$, Theorem \ref{theorem BOUND} agrees with the bound on the codimension of $\mathcal{M}_g^r$, as it gives $\dim \mathcal{Z}\geq \left(3g-3-\frac{r(r+1)}{2}\right)+r$.
Moreover, the assertion for $\underline{k}=(g-1)$ coincides with \cite[Theorem 1.1]{BP}, and the proof of our result relies on a similar argument.

\smallskip
For any $r\geq 0$, we then consider the integer $g(r)$ defined by
\begin{equation}\label{equation g(r)}
\displaystyle g(r):=\left\{
\begin{array}{ll}
2 & \textrm{for }r=0\\
3r & \textrm{for }1\leq r\leq 3\\
{r+2\choose 2} & \textrm{for }r\geq 4.
\end{array}\right.
\end{equation}
We prove the sharpness of the bound in Theorem \ref{theorem BOUND} for any $r\geq 0$ and $g\geq g(r)$.
Namely,
\begin{theorem}\label{theorem EXP DIM}
For any genus $g\geq g(r)$, and for any partition $\underline{k}=(k_1,\ldots,k_n)$ of $g-1$, the locus $\mathcal{G}_g^{r}(\underline{k})$ is non-empty, and there exists an irreducible component $\mathcal{Z}\subset\mathcal{G}_{g}^r(\underline{k})$ having expected dimension.
In particular, at a general point $[C,p_1,\ldots,p_n]\in \mathcal{Z}$, the large theta-characteristic $\mathcal{O}_C\left(\sum_{i=1}^n k_ip_i\right)$ possesses exactly $r+1$ independent global sections and, apart from the cases $(r,g)=(0,2)$ and $(1,3)$, the curve $C$ is non-hyperelliptic.
\end{theorem}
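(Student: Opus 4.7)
The plan is to exhibit, for $g \geq g(r)$ and any partition $\underline{k}=(k_1,\ldots,k_n)$ of $g-1$, an irreducible closed subset $\mathcal{Z} \subseteq \mathcal{G}_g^r(\underline{k})$ of dimension exactly $2g-2+n-\frac{r(r-1)}{2}$. By Theorem \ref{theorem BOUND}, every irreducible component of $\mathcal{G}_g^r(\underline{k})$ has dimension at least this value, so any component containing the generic point of such a $\mathcal{Z}$ automatically has expected dimension. The final clauses (equality $h^0(C,L)=r+1$ and non-hyperellipticity outside the listed exceptional cases) will then propagate from the construction by semicontinuity.

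The input is the sharpness of Harris' bound in $\mathcal{M}_g^r$: for $g \geq g(r)$, \cite{Be1, Be2} produce an irreducible component $\mathcal{M}' \subseteq \mathcal{M}_g^r$ of codimension exactly $\frac{r(r+1)}{2}$ in $\mathcal{M}_g$, at whose general point $[C]$ the theta-characteristic $L$ satisfies $h^0(C,L)=r+1$ exactly and, outside $(r,g)\in\{(0,2),(1,3)\}$, $C$ is non-hyperelliptic. Passing to an \'etale cover of $\mathcal{M}'$ that trivialises the parity datum, I would introduce the universal theta-characteristic $\mathcal{L}$ on the universal curve $\mathcal{C}\to\mathcal{M}'$ and the associated projective bundle $|\mathcal{L}|\to\mathcal{M}'$ of relative dimension $r$. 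Viewing $|\mathcal{L}|$ as a subscheme of the relative symmetric product $\mathcal{C}^{(g-1)}_{/\mathcal{M}'}$ (of relative dimension $g-1$) and letting $\mathcal{S}_{\underline{k}}\subset \mathcal{C}^{(g-1)}_{/\mathcal{M}'}$ denote the locally closed stratum of divisors of multiplicity type $\underline{k}$ (relative codimension $g-1-n$), I set
\[
\widetilde{\mathcal{Z}} \; := \; |\mathcal{L}| \cap \mathcal{S}_{\underline{k}},
\]
whose expected dimension inside $\mathcal{C}^{(g-1)}_{/\mathcal{M}'}$ equals $\dim \mathcal{M}' + r - (g-1-n) = 2g-2+n-\frac{r(r-1)}{2}$. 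The natural map $\widetilde{\mathcal{Z}}\to \mathcal{M}_{g,n}$, that sends $(C,\sum_i k_ip_i)$ to the marked curve $[C,p_1,\ldots,p_n]$ (well-defined up to permutations of indices with equal multiplicities), factors through $\mathcal{G}_g^r(\underline{k})$, and I denote its image by $\mathcal{Z}$.

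The bulk of the work, and the genuine obstacle, is to prove that $\widetilde{\mathcal{Z}}$ is non-empty and that the scheme-theoretic intersection $|\mathcal{L}|\cap \mathcal{S}_{\underline{k}}$ is proper (i.e.\ of expected dimension) at a suitable point. For the uniform partition $\underline{k}=(1,\ldots,1)$ this is immediate, since a generic section of $\mathcal{L}$ on any $[C]\in\mathcal{M}'$ has $g-1$ distinct simple zeros, producing a point of $\widetilde{\mathcal{Z}}$ whose fibre over $[C]$ is the full linear system $|L|\cong\mathbb{P}^r$. For an arbitrary $\underline{k}$, I would argue by specialisation inside the projective bundle $|\mathcal{L}|$: starting from a divisor of type $(1,\ldots,1)$, I deform along curves in $|\mathcal{L}|$, forcing prescribed groups of simple zeros to coalesce into points of multiplicities $k_1,\ldots,k_n$; this produces a point of $\widetilde{\mathcal{Z}}$. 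The transversality of $|\mathcal{L}|$ with $\mathcal{S}_{\underline{k}}$ at such a limit point is the hard part, and will be verified by an infinitesimal deformation computation on the explicit models of \cite{Be1, Be2}, in the spirit of \cite[Theorem 1.1]{BP} (which treats the extreme case $\underline{k}=(g-1)$). Once non-emptiness and properness are secured, the dimension count for $\widetilde{\mathcal{Z}}$ together with the lower bound in Theorem \ref{theorem BOUND} forces $\mathcal{Z}$ to lie in an irreducible component of $\mathcal{G}_g^r(\underline{k})$ of expected dimension $2g-2+n-\frac{r(r-1)}{2}$, completing the proof.
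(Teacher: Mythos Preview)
Your framework---intersecting the relative linear system $|\mathcal{L}|$ with the $\underline{k}$-diagonal stratum inside the relative symmetric product over a component $\mathcal{M}'\subset\mathcal{M}_g^r$---is precisely the setup the paper uses in Theorems~\ref{theorem BOUND} and~\ref{theorem PARTITION}. But the argument has a real gap at the step you yourself flag as ``the hard part.'' Your non-emptiness claim does not work as stated: you propose to start from a reduced divisor in $|L|$ and ``deform along curves in $|\mathcal{L}|$, forcing prescribed groups of simple zeros to coalesce.'' Over a fixed $[C]\in\mathcal{M}'$, however, $|L|$ has dimension $r$ while the $\underline{k}$-diagonal has codimension $g-1-n$ in $C^{(g-1)}$; whenever $g-1-n>r$ (for instance $\underline{k}=(g-1)$ with $g>r+2$, which covers all $g\geq g(r)$ for $r\geq 2$) the expected dimension of $|L|\cap\Delta_{\underline{k}}$ is negative, so on a generic such curve there is no divisor of type $\underline{k}$ in $|L|$ to coalesce to. You must let $[C]$ vary, but then you are simply restating the question of whether $\widetilde{\mathcal{Z}}\neq\emptyset$. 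The transversality you defer to ``an infinitesimal computation on the explicit models of \cite{Be1,Be2}'' is the entire content of the theorem and is nowhere carried out; note also that \cite[Theorem~1.1]{BP} is the codimension \emph{bound} for subcanonical points, not an existence or transversality result.

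The paper avoids repeating this hard step for every $\underline{k}$ by observing that the implication runs in the \emph{opposite} direction to yours: if the result holds for a partition $\underline{h}$, then it holds for every $\underline{k}\preceq\underline{h}$, because $\Delta_{\underline{h}}$ sits as a divisor inside $\Delta_{\underline{k}}$ and the lower bound of Theorem~\ref{theorem BOUND} then pins the dimension of any component of $\mathcal{Y}\cap\Delta_{\underline{k}}$ containing the given one (Theorem~\ref{theorem PARTITION}). Hence it suffices to treat only the single extreme partition $\underline{k}=(g-1)$. That subcanonical case is then established by an explicit construction: induction on $r$ via nodal degenerations in $\mathbb{P}^r$ consisting of an elliptic normal curve attached to a degenerate half-canonical curve (Theorem~\ref{theorem SMOOTHING r}), followed by induction on $g$ via limit linear series (Theorem~\ref{theorem SMOOTHING g}). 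Your proposal is missing both the reduction to $\underline{k}=(g-1)$ and any concrete construction in that case.
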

In the light of the relation between $\mathcal{M}^r_g$ and $\mathcal{G}_{g}^r(\underline{k})$, the assertion for $\underline{k}=(1,\ldots,1)$ is well-known for small values of $r$ (cf. \cite{Te}), and it is included in \cite[Theorem 1.2]{Be1} for arbitrary $r$.
In particular, the value of $g(r)$ can be lowered in this case (see \cite{Fa1, Be2}).
Besides, the statement in the case $\underline{k}=(g-1)$ with $0\leq r\leq 3$ is covered by different results in \cite{KZ, Bu, BP}, so Theorem \ref{theorem EXP DIM} extends them to arbitrary large values of $r$.

We note further that under the assumption $\underline{k}=(g-1)$, the bound in Theorem \ref{theorem BOUND} is meaningful as long as the expected dimension is non-negative, that is $ g\geq \left\lfloor\frac{r^2-r+4}{4}\right\rfloor$, which is hypothetically the best value for $g(r)$ when $r$ is large enough.

\smallskip
In order to prove Theorem \ref{theorem EXP DIM}, we firstly show that the whole statement follows from the assertion for $\underline{k}=(g-1)$ (see Theorem \ref{theorem PARTITION}).
Roughly speaking, this depends on the fact that each irreducible component of the subcanonical locus $\mathcal{G}_{g}^r$ may be thought as a degeneration of any $\mathcal{G}_{g}^r(\underline{k})$ (cf. also \cite[Corollary 2]{KZ}).
Therefore the assertion for small values of $r$ follows from known results on the loci of subcanonical points.

As far as arbitrary large values of $r$ are concerned, we argue by induction on $g$ and $r$ separately, in analogy with \cite{Fa1, Be1} where the sharpness of Harris' bound is proved.
Thanks to \cite[Theorem 4.1]{BP}, if $g(r)$ is an integer such that $\mathcal{G}_{g(r)}^r$ admits an irreducible component $\mathcal{Z}_{g(r)}$ having expected dimension, then for any $g\geq g(r)$, the locus
$\mathcal{G}_{g}^r$ has an irreducible component with the same property.
By using basic facts on Eisenbud-Harris' limit linear series \cite{EH1, EH2}, we slightly improve the latter result in order to keep track of global sections (see Theorem \ref{theorem SMOOTHING g}).

In the spirit of \cite{Be1}, we then set $g(r):={r+2 \choose 2}$, and we prove that if $\mathcal{Z}_{g(r-1)}\subset \mathcal{G}_{g(r-1)}^{r-1}$ is an irreducible component satisfying equality in Theorem \ref{theorem BOUND}, then there exists an irreducible component $\mathcal{Z}_{g(r)}\subset \mathcal{G}_{g(r)}^r$ which still has expected dimension (cf. Theorem \ref{theorem SMOOTHING r}), so that Theorem \ref{theorem EXP DIM} follows.
The proof of such a result is mainly based on deformation theory and Ran's description of Hilbert schemes of points on nodal curves \cite{Ra1, Ra2, Ra3, Ra4, Ra5}.
In particular, we consider nodal reducible curves in $\mathbb{P}^r$ consisting of an elliptic normal curve suitably attached to a degenerate curve $C$ such that $[C]\in \mathcal{M}^{r-1}_{g(r-1)}$.
It follows from \cite{Be1}, that these singular curves can be deformed to smooth curves $X\subset \mathbb{P}^r$ such that $[X]\in \mathcal{M}^{r}_{g(r)}$, where $\mathcal{O}_X(1)$ is the corresponding large theta-characteristic.
Therefore a pair $[X,p]$ lies on $\mathcal{G}_{g(r)}^r$, i.e. $p\in X$ is a \emph{subcanonical point} of $X$, if and only if the divisor $\left(g(r)-1\right)p$ is cut out on $X$ by some hyperplane.
So we extend in these terms the notion of being a subcanonical point on the nodal curves.
Finally, we show that the nodal curves can be smoothed into a component of expected dimension, preserving the property of having a subcanonical point.

\smallskip
The paper is organized as follows.
In Section \ref{section LARGE THETA} we prove Theorem \ref{theorem BOUND}, we reduce the proof of Theorem \ref{theorem EXP DIM} to the case $\underline{k}=(g-1)$, and we recall some preliminary results about loci of subcanonical points.
Then Section \ref{section EXP DIM} is entirely devoted to perform the induction on $r$, and to conclude the proof of Theorem \ref{theorem EXP DIM}.

\section{Loci of large theta-characteristics with prescribed vanishing}\label{section LARGE THETA}

In this section we firstly aim at proving Theorem \ref{theorem BOUND}.
Then we shall reduce the proof of Theorem \ref{theorem EXP DIM} to the case of loci of subcanonical points, and we shall recall some preliminary results on this topic.

\subsection{Notation}

We work throughout over the field $\mathbb{C}$ of complex numbers.
By \emph{curve} we mean a complete connected reduced algebraic curve over $\mathbb{C}$.
Given a variety $X$, we say that a property holds for a \emph{general} point $x\in X$ if it holds on a non-empty open subset of X.

As is customary, we denote by $\mathcal{M}_g$ the moduli space of smooth curves of genus $g$. Let $\mathcal{M}_{g,n}$ be the moduli space consisting of isomorphism classes of ordered $(n+1)$-tuples $(C,p_1,\ldots,p_n)$, where $[C]\in \mathcal{M}_g$ and $p_1,\ldots,p_n\in C$ are distinct points, and let $\pi_n\colon \mathcal{M}_{g,n}\longrightarrow \mathcal{M}_{g}$ be the \emph{forgetful} morphism sending $[C,p_1,\ldots,p_n]$ to $[C]$.

Moreover, let $\mathcal{S}_g$ denote the moduli space of spin curves, which parameterizes pairs $[C,L]$ such that $[C]\in \mathcal{M}_g$ and $L$ is a theta-characteristic on $C$.
Hence there is a natural map $\varphi\colon \mathcal{S}_g\longrightarrow \mathcal{M}_g$ of degree $2^{2g}$ sending $[C,L]$ to $[C]$.
According to the notation on $\mathcal{M}_g$, we define $\mathcal{S}^r_g:=\left\{[C,L]\in \mathcal{S}_g\left|h^0\left(C,L\right)\geq r+1\textrm{ and } h^0\left(C,L\right)\equiv r+1\,(\mathrm{mod}\,2)\right.\right\}$.
In addition, we consider the moduli space $\mathcal{S}_{g,n}$ of $(n+2)$-tuples $[C,L,p_1,\ldots,p_n]$ with $[C,p_1,\ldots,p_n]\in \mathcal{M}_{g,n}$ and $[C,L]\in \mathcal{S}_g$, and we denote by $\mathcal{S}^r_{g,n}$ the sublocus of $n$-pointed spin curves such that $[C,L]\in \mathcal{S}^r_g$.

\subsection{Bound on the codimension of $\mathcal{G}_{g}^r(\underline{k})$ in $\mathcal{M}_{g,n}$}

We are now going to prove Theorem \ref{theorem BOUND}.
As in the proof of  \cite[Theorem 1.1]{BP}, the result is achieved by intersecting cycles on the relative symmetric product of families of curves with large theta-characteristics.

\begin{proof}[Proof of Theorem \ref{theorem BOUND}]
Let us fix integers $r\geq 0$, $g>n>0$, and consider a partition $\underline{k}=(k_1,\ldots,k_n)$ of $g-1$.
Assuming that $\mathcal{G}_{g}^r(\underline{k})$ is non-empty, we consider a point ${[C,p_1,\ldots,p_n]\in \mathcal{G}_{g}^r(\underline{k})}$.
Hence we want to prove that any irreducible component $\mathcal{Z}\subset \mathcal{G}_{g}^r(\underline{k})$ passing through $[C,p_1,\ldots,p_n]$ has dimension ${\dim \mathcal{Z} \geq  2g-2+n-\frac{r(r-1)}{2}}$.\\
By assumption, the line bundle $L:=\mathcal{O}_C\left(\sum_{i=1}^n k_ip_i\right)$ is a theta-characteristic having $h^0\left(C,L\right)\geq r+1$ and $h^0\left(C,L\right)\equiv r+1\,(\mathrm{mod}\,2)$, so that ${[C,L,p_1,\ldots,p_n]\in \mathcal{S}_{g,n}^r}$.
Therefore we can consider a versal deformation family ${\left(\mathcal{C}\stackrel{\phi}{\longrightarrow} U, \mathcal{L}\longrightarrow\mathcal{C},  U\stackrel{\rho_i}{\longrightarrow} \mathcal{C}\right)}$ of the $n$-pointed curve $(C,L,p_1,\ldots,p_n)$ in $\mathcal{S}_{g,n}$.
So there is a commutative diagram of finite maps
\begin{equation}\label{equation VERSAL DEFORMATION}
\xymatrix{ U \ar[r] \ar[d] & U/ \Aut(C,L,p_1,\ldots,p_n) \ar@{^{(}->}[r] & \mathcal{S}_{g,n}\ar[d]^{\varphi_n} \\  U' \ar[r] & U'/ \Aut(C,p_1,\ldots,p_n) \ar@{^{(}->}[r] & \mathcal{M}_{g,n} }
\end{equation}
where $U'$ is a versal deformation space of $(C,p_1,\ldots,p_n)$ in $\mathcal{M}_{g,n}$, and $\varphi_n$ is the natural map of degree $2^{2g}$.
In particular, the family ${\phi\colon \mathcal{C}\longrightarrow U}$ consists of smooth curves $C_t:=\phi^{-1}(t)$ of genus $g$, the line bundle $\mathcal{L}\longrightarrow\mathcal{C}$ restricts to a theta-characteristics ${L_t:=\mathcal{L}_{|C_t}}$ on each fibre, and for $i=1,\ldots,n$, the maps ${\rho_i\colon U\longrightarrow \mathcal{C}}$ are sections of $\phi$ with ${p_{i,t}:=\rho_i(t)\in C_t}$.
Moreover, we may assume $(C_{0},L_{0},p_{1,0},\ldots,p_{n,0})=(C,L,p_1,\ldots,p_n)$ for some point $0\in U$.\\
Then we restrict the versal deformation to the locus ${U^r:=\left\{t\in U\left|[C_t,L_t,p_{1,t},\ldots,p_{n,t}]\in \mathcal{S}_{g,n}^r\right.\right\}}$, and we consider the $(g-1)$-fold relative symmetric product ${\mathcal{C}^{(g-1)}\stackrel{\Phi}{\longrightarrow} U^r}$ of the family $\mathcal{C}$, so that the fibre over each $t$ is the $(g-1)$-fold symmetric product $C_t^{(g-1)}$ of the curve $C_t$.
Let us define two subvarieties of $\mathcal{C}^{(g-1)}$ as
$$
{\mathcal{P}:=\left\{\left.k_1p_{1,t}+\dots+k_np_{n,t}\in C_t^{(g-1)}\right|t\in U^r\right\}},
$$
which restricts to a point of the $\underline{k}$-diagonal on each fibre $C_t^{(g-1)}$, and
$$
{\mathcal{Y}:=\left\{\left. q_1+\dots+q_{g-1}\in C_t^{(g-1)}\right|t\in U^r \textrm{ and }\mathcal{O}_{C_t}\left(q_1+\dots+q_{g-1}\right)\cong L_t\right\}},
$$
which parameterizes effective divisors $Q_t\in C_t^{(g-1)}$ in the linear systems $|L_t|$.
Thus, if ${k_1p_{1,t}+\dots+k_np_{n,t}\in\mathcal{P}\cap\mathcal{Y}}$, the line bundle $\mathcal{O}_{C_t}\left(\sum_{i=1}^n k_ip_{i,t}\right)$ is a theta-characteristic such that ${h^0\left(C_t,\mathcal{O}_{C_t}\left(\sum_{i=1}^n k_ip_{i,t}\right)\right)\geq r+1}$ and ${h^0\left(C_t,\mathcal{O}_{C_t}\left(\sum_{i=1}^n k_ip_{i,t}\right)\right)\equiv r+1\,(\mathrm{mod}\,2)}$, that is ${[C_t,p_{1,t},\ldots,p_{n,t}]\in\mathcal{G}_{g}^r(\underline{k})}$.
It follows from the diagram (\ref{equation VERSAL DEFORMATION}), that the map ${U\longrightarrow \mathcal{M}_{g,n}}$ given by ${t\longmapsto [C_t,p_{1,t}\ldots,p_{n,t}]}$ is finite.
Therefore each irreducible component $\mathcal{Z}\subset \mathcal{G}_{g}^r(\underline{k})$ passing through $[C,p_{1},\ldots,p_{n}]$ satisfies
\begin{equation}\label{equation DIMENSION}
\dim \mathcal{Z} \geq \dim \mathcal{P} + \dim \mathcal{Y} - \dim \mathcal{C}^{(g-1)},
\end{equation}
which is the minimal dimension of any irreducible component of $\mathcal{P}\cap \mathcal{Y}$.
We point out that ${\dim \mathcal{P}=\dim U^r\geq 3g-3+n-\frac{r(r+1)}{2}}$ by Harris' bound (cf. \cite[Corollary 1.11]{Ha}).
Moreover,  $\dim \mathcal{Y}\geq\dim U^r + r$ and ${\dim \mathcal{C}^{(g-1)}=\dim U^r + g-1}$.
Thus ${\dim \mathcal{Z} \geq  2g-2+n-\frac{r(r-1)}{2}}$, and the assertion follows.
\end{proof}

We note that when the integer $g$ is small, the combinatorics of the loci $\mathcal{G}_{g}^r(\underline{k})$ is very simple.
In particular, these loci can be easily described, and they always have expected dimension, as the following example shows (cf. \cite[Theorem 2]{KZ} and \cite[Section 4]{Bu}).

\begin{example}[Low genera]\label{example LOW GENERA}	
In the case $g=2$, the unique locus to consider is $\mathcal{G}_{2}^0(1)$.
It is the irreducible divisor of $\mathcal{M}_{2,1}$ parameterizing pairs $[C,p]$, where $p\in C$ is a ramification point of the hyperelliptic map of $C$, and we have $h^0\left(C,\mathcal{O}_C(p)\right)=1$.\\
When $g=3$, each locus $\mathcal{G}_{3}^r(\underline{k})$ is still irreducible, with $0\leq r\leq 1$ and $\underline{k}\in \{(1,1),(2)\}$.
Recall that the canonical model of a non-hyperelliptic curve is a quartic curve in $C\subset \mathbb{P}^2$, and theta-characteristics are cut out by bitangent lines.
Therefore $\mathcal{G}_{3}^0(1,1)$ is the $6$-dimensional locus described by triples $(C,p_1,p_2)$ such that $p_1$ and $p_2$ have the same tangent line, whereas $\mathcal{G}_{3}^0(2)$ has dimension $5$ and parameterizes plane quartics endowed with a 4-inflection point.
On the other hand, hyperelliptic curves of genus $3$ give rise to $\mathcal{G}_{3}^1(1,1)$ and $\mathcal{G}_{3}^1(2)$. In particular, the former locus has dimension $6$ and consists of triples $[C,p_1,p_2]$, where $p_1$ and $p_2$ are conjugated under the hyperelliptic involution, whereas $\mathcal{G}_{3}^1(2)$ is the $5$-dimensional variety described by pairs $[C,p]$, where $p\in C$ is a Weierstrass point.
Furthermore, it is easy to see that for any point $[C,p_1,p_2]\in \mathcal{G}_{3}^r(1,1)$ (resp. $[C,p]\in\mathcal{G}_{3}^r(2)$) as above, we have $h^0\left(C,\mathcal{O}_C(p_1+p_2)\right)=r+1$ (resp. $h^0\left(C,\mathcal{O}_C(p)\right)=r+1$).\\
We note finally that if $p_1,p_2\in C$ are distinct Weierstrass points of a hyperelliptic curve $C$ of genus 3, then $[C,p_1,p_2]\in \mathcal{G}_{3}^0(1,1)$, and these triples span a divisor of $\mathcal{G}_{3}^0(1,1)$.
\end{example}

We point out that the locus $\mathcal{M}_g^r$ is empty if and only if $r>\frac{g-1}{2}$ (see e.g. \cite{Te}).
This fact follows from Clifford's Theorem and from the description of linear series on hyperelliptic curves (cf. \cite{ACGH}).
In particular, for any $0\leq r\leq \left\lfloor\frac{g-1}{2}\right\rfloor$, hyperelliptic curves of genus $g$ possess theta-characteristics with exactly $r+1$ global sections.
The example below shows that hyperelliptic curves provide several examples of large theta-characteristics with prescribed vanishing.
However, the loci they describe may not have expected dimension, and they do not cover all the possibilities for $r$ and $\underline{k}$.
This is indeed one of the main reasons for focusing on non-hyperelliptic curves.

\begin{example}[Hyperelliptic locus]\label{example HYPERELLIPTIC}
Let $L$ be a theta-characteristic on a hyperelliptic curve $C$ of genus $g$.
Effective divisors in the linear series $|L|$ have the form $\sum \alpha_i w_i + \sum \beta_j(p_j+q_j)$, where each $w_i$ is a different Weierstass point, the distinct pairs $(p_j,q_j)\in C^2$ consist of points conjugated by the hyperelliptic involution, and the positive integers $\alpha_i,\beta_j$ are such that $\sum \alpha_i+2\sum \beta_j=g-1$.
In particular, the dimension of $|L|$ is $r=\sum \lfloor \frac{\alpha_i}{2}\rfloor+ \sum \beta_j$ (cf. \cite[p. 13]{ACGH} and \cite[Appendix B]{KZ}).\\
Then, if $\underline{k}$ is the partition of $g-1$ obtained by reordering $(\alpha_1,\ldots ,\alpha_s,\beta_1,\beta_1,\ldots,\beta_t,\beta_t)$, the corresponding tuples of the form $(C, w_1,\ldots ,w_s,p_1,q_1,\ldots, p_t,q_t )$ describe a sublocus of $\mathcal{G}^r_g(\underline{k})$ of dimension $2g-1+t$.
As it has been observed at the end of Example \ref{example LOW GENERA}---under the assuption $g=3$, $s=2$ and $t=0$---such a sublocus may not be a whole component of  $\mathcal{G}^r_g(\underline{k})$.\\
Furthermore, if for instance $\underline{k}=(g-1)$, hyperelliptic curves describe an irreducible component of $\mathcal{G}^r_g(g-1)$ consisting of pairs $[C,w_1]$, so that its dimension is $2g-1$ and $r=\lfloor \frac{g-1}{2}\rfloor$.
In particular, choosing $\rho\in \left\{0,1\right\}$ with the same parity as $r+1$, such a component may be viewed as an irreducible component of $\mathcal{G}^\rho_g(g-1)$ having expected dimension.
However, no components with different parity can contain hyperelliptic curves.
\end{example}

\subsection{Reduction to the case of loci of subcanonical points}

In this subsection, we argue as in Theorem \ref{theorem BOUND} and we show that, in order to prove Theorem \ref{theorem EXP DIM}, it is enough to focus on the partition $\underline{k}=(g-1)$.

We recall that the set of partitions $\underline{k}$ of $g-1$ is naturally endowed with the structure of partially ordered set, as follows.
Given two partitions $\underline{k}=(k_1,\ldots,k_n)$ and $\underline{h}=(h_1,\ldots,h_m)$, we have that $\underline{k}\preceq\underline{h}$ if and only if there exists a partition $I_1\cup\dots\cup I_m$ of the set of indices $I=\{1,\ldots,n\}$ such that $h_j:=\sum_{i\in I_j}k_i$ for any $j=1,\ldots,m$.
For instance, we have that $\underline{k}\preceq (g-1)$ and $(1,\ldots,1)\preceq\underline{k}$ for any partition $\underline{k}$ of $g-1$: the former relation is obtained by taking the partition $I_1=I$, the latter one by partitioning the set $\{1,\ldots,g-1\}$ as $\{1,\ldots,k_1\}\cup\{k_1+1,\ldots,k_1+k_2\}\cup\dots \cup\{k_{n-1}+1,\ldots,g-1\}$.

We prove the following.
\begin{theorem}\label{theorem PARTITION}
Let $\underline{h}=(h_1,\ldots,h_m)$ be a partition of $g-1$, and assume that there exists an irreducible component $\mathcal{Z}\subset \mathcal{G}^r_g(\underline{h})$ having expected dimension.
Then for any partition $\underline{k}=(k_1,\ldots,k_n)$ satisfying $\underline{k}\preceq\underline{h}$, there exists an irreducible component $\mathcal{W}\subset \mathcal{G}^r_g(\underline{k})$ having expected dimension.\\
Furthermore, if the general point $[C,p_1,\ldots,p_m]\in \mathcal{Z}$ consists of a non-hyperelliptic curve $C$ such that $h^0(C, \mathcal{O}_C(\sum_{j=1}^m h_j p_j))=r+1$, then the general point $[D,q_1,\ldots,q_n]\in \mathcal{W}$ parameterizes a non-hyperelliptic curve $D$ such that $h^0\left(D, \mathcal{O}_{D}\left(\sum_{i=1}^n k_i q_i\right)\right)=r+1$, as well.\\
In particular, if the subcanonical locus $\mathcal{G}^r_g(g-1)$ admits an irreducible component of expected dimension with general point as above, then $\mathcal{G}^r_g(\underline{k})$ does for any partition $\underline{k}$ of $g-1$.
\begin{proof}
We notice that for any partition $\underline{k}$ satisfying $\underline{k}\preceq\underline{h}$, there exists a chain of relations $\underline{k}\preceq\underline{k}^{n-1}\preceq\dots\preceq\underline{k}^{m+1}\preceq\underline{h}$ such that for any $n-1\geq l\geq m+1$, the sequence $\underline{k}^{l}$ consists of exactly $l$ integers.
Thus it suffices to prove the statement in the case $n=m+1$, so that the whole assertion follows by iteration.
Accordingly, we assume hereafter that $n=m+1$.\\
Consider a point $[C, p_1,\ldots,p_m]\in\mathcal{Z}\subset \mathcal{G}^r_g(\underline{h})$, and let $(C,L)$ be the associated spin curve, where ${L:=\mathcal{O}_C\left(\sum_{j=1}^m h_jp_j\right)}$.
We argue as in the proof of Theorem \ref{theorem BOUND}, and we consider a versal deformation family ${\left(\mathcal{C}\stackrel{\phi}{\longrightarrow} U, \mathcal{L}\longrightarrow\mathcal{C}\right)}$ of $(C,L)$ in the moduli space $\mathcal{S}_{g}$ of spin curves.
We set $C_t:=\phi^{-1}(t)$ and ${L_t:=\mathcal{L}_{|C_t}}$, where $[C_0,L_0]=[C,L]$ for some $0\in U$.
Let ${\mathcal{C}^{(g-1)}\stackrel{\Phi}{\longrightarrow} U^r}$ be the restriction of the relative $(g-1)$-fold symmetric product to the locus ${U^r:=\left\{t\in U\left|[C_t,L_t]\in \mathcal{S}_{g}^r\right.\right\}}$, and set
${\mathcal{Y}:=\left\{\left. q_1+\dots+q_{g-1}\in C_t^{(g-1)}\right|t\in U^r \textrm{ and }\mathcal{O}_{C_t}\left(q_1+\dots+q_{g-1}\right)\cong L_t\right\}}$.
Moreover, we denote by ${\Delta_{\underline{k}}:=\left\{\left. k_1q_1+\dots+k_nq_{n}\in C_t^{(g-1)}\right|t\in U^r\right\}}$ the relative $\underline{k}$-diagonal.\\
Since ${\dim \Delta_{\underline{k}}=\dim U^r+n\geq 3g-3+n-\frac{r(r+1)}{2}}$ by \cite[Corollary 1.11]{Ha},  $\dim \mathcal{Y}\geq\dim U^r + r$, and ${\dim \mathcal{C}^{(g-1)}=\dim U^r + g-1}$, the dimension of any irreducible component  $\mathcal{W}_{\underline{k}}$ of $\mathcal{Y}\cap \Delta_{\underline{k}}$ is bounded by
\begin{equation}\label{equation DIMENSION2}
\dim \mathcal{W}_{\underline{k}}\geq \dim \mathcal{Y} + \dim \Delta_{\underline{k}} - \dim \mathcal{C}^{(g-1)}\geq 2g-2+n-\frac{r(r-1)}{2}.
\end{equation}
We recall that $\underline{k}\preceq \underline{h}$ and $n=m+1$.
Therefore the relative $\underline{h}$-diagonal $\Delta_{\underline{h}}$ is a codimension 1 subvariety of $\Delta_{\underline{k}}$, and the analogue of (\ref{equation DIMENSION2}) gives $\dim \left(\mathcal{Y}\cap \Delta_{\underline{h}}\right) \geq 2g-2+m-\frac{r(r-1)}{2}$, with $m=n-1$.
We point out that, if $h_1q_1+\dots+h_mq_m\in \mathcal{Y}\cap \Delta_{\underline{h}}$ and the $q_j$'s are distinct, the isomorphism class $[C_t, q_1,\ldots,q_m]\in \mathcal{M}_{g,m}$ lies in $\mathcal{G}^r_m(\underline{h})$.
Moreover, any component of $\mathcal{Y}\cap \Delta_{\underline{h}}$ passing through  $h_1q_1+\dots+h_mq_m$ admits a finite rational map on an irreducible component of $\mathcal{G}^r_m(\underline{h})$.
Since the component $\mathcal{Z}\in \mathcal{G}^r_m(\underline{h})$ has expected dimension $2g-2+m-\frac{r(r-1)}{2}$ and the point $[C,p_1,\ldots,p_m]\in \mathcal{Z}$ is such that $h_1p_1+\dots+h_mp_m\in C^{(g-1)}\subset \mathcal{Y}\cap \Delta_{\underline{h}}$, there exists an irreducible component $\mathcal{Z}_{\underline{h}}\subset \mathcal{Y}\cap \Delta_{\underline{h}}$ admitting a finite dominant rational map $\mathcal{Z}_{\underline{h}}\dashrightarrow \mathcal{Z}$ given by $h_1q_1+\dots+h_mq_m\in C_t^{(g-1)}\longmapsto [C_t, q_1,\ldots,q_m]$.\\
Let $\mathcal{W}_{\underline{k}}$ be an irreducible component of $\mathcal{Y}\cap \Delta_{\underline{k}}$ containing $\mathcal{Z}_{\underline{h}}$.
By inequality (\ref{equation DIMENSION2}), we have $\dim \mathcal{W}_{\underline{k}}>\dim \mathcal{Z}_{\underline{h}}=2g-3+n-\frac{r(r-1)}{2}$.
Hence the general point $k_1q_1+\dots+k_nq_n\in\mathcal{W}_{\underline{k}}$ is such that the $q_i$'s are distinct.
Thus there exists a finite dominant rational map $\mathcal{W}_{\underline{k}}\dashrightarrow \mathcal{W}\subset \mathcal{M}_{g,n}$ defined as $k_1q_1+\dots+k_nq_n\in C_t^{(g-1)}\longmapsto [C_t, q_1,\ldots,q_n]$, whose image is dense into an irreducible component $\mathcal{W}$ of $\mathcal{G}^r_g(\underline{k})$.
Then, we need to prove that $\mathcal{W}_{\underline{k}}$---and hence $\mathcal{W}$---has the expected dimension, that is $\dim \mathcal{W}_{\underline{k}}=\dim \mathcal{Z}_{\underline{h}}+1$.
As $\mathcal{W}_{\underline{k}}\subset \mathcal{Y}\cap \Delta_{\underline{k}}$ is an irreducible component, $\Delta_{\underline{h}}$ has codimension 1 in $\Delta_{\underline{k}}$, and $\mathcal{Z}_{\underline{h}}$ is an irreducible component of $\mathcal{W}_{\underline{k}}\cap \Delta_{\underline{h}}$, the first part of the assertion follows.\\
In order to conclude the proof, we assume that $[C,p_1,\ldots,p_m]\in \mathcal{Z}$ consists of a non-hyperelliptic curve $C$ such that $h^0(C, \mathcal{O}_C(\sum_{j=1}^m h_j p_j))=r+1$.
Let $U'\subset U^r$ be the image of $\mathcal{W}_{\underline{k}}$ under the morphism ${\mathcal{C}^{(g-1)}\stackrel{\Phi}{\longrightarrow} U^r}$, that is
$\displaystyle U':=\left\{t\in U^r\left|\exists\, q_1,\ldots,q_n\in C_t\textrm{ with } k_1q_1+\dots+k_nq_n\in \mathcal{W}_{\underline{k}}\right.\right\}$, and let ${\Big(\mathcal{C}\stackrel{\phi'}{\longrightarrow} U', \mathcal{L}\longrightarrow\mathcal{C}\Big)}$ be the restriction of the versal deformation family of $\left(C,\mathcal{O}_C(\sum_{j=1}^m h_j p_j)\right)$.
Therefore, for the general point $t\in U'$, there exist $q_1,\ldots,q_n\in C_t$ such that $[C_t,q_1,\ldots,q_n]\in \mathcal{W}$ and $L_t\cong \mathcal{O}_{C_t}\left(\sum_{i=1}^n k_i q_i\right)$.
Moreover, $h_1p_1+\dots+h_mp_m\in\mathcal{Z}_{\underline{h}}\subset \mathcal{W}_{\underline{k}}$, and hence $0\in U'$, i.e. the curve $C=C_0$ is a special fibre of the family $\mathcal{C}\stackrel{\phi'}{\longrightarrow} U'$ with theta-characteristic $L_0\cong \mathcal{O}_C(\sum_{j=1}^m h_j p_j)$.
Thus the general fibre $C_t=\left(\phi'\right)^{-1}(t)$ cannot be hyperelliptic if $C$ is not.
Furthermore, upper semi-continuity of the function $t\longmapsto h^0\left(C_t,L_t\right)$ implies that $h^0\left(C_t, \mathcal{O}_{C_t}\left(\sum_{i=1}^n k_i q_i\right)\right)=r+1$ for general $t\in U'$ (see e.g. \cite[Theorem III.12.8]{Hs}).
\end{proof}
\end{theorem}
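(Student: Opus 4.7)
The plan is to mimic the dimension-theoretic argument of Theorem \ref{theorem BOUND}, after reducing to the one-step case. Any refinement $\underline{k}\preceq\underline{h}$ factors through a chain $\underline{k}\preceq\underline{k}^{n-1}\preceq\cdots\preceq\underline{k}^{m+1}\preceq\underline{h}$ shortening the length by one at each step, so by induction I may assume $n=m+1$ and that $\underline{h}$ is obtained from $\underline{k}$ by merging two parts into $k_i+k_{i'}$.

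First, I would pick a general $[C,p_1,\ldots,p_m]\in\mathcal{Z}$, set $L=\mathcal{O}_C\left(\sum h_jp_j\right)$, and take a versal deformation $(\mathcal{C}\to U,\mathcal{L}\to\mathcal{C})$ of the spin curve $(C,L)$, then restrict to $U^r=\{t\in U:[C_t,L_t]\in\mathcal{S}_g^r\}$ and pass to the relative symmetric product $\mathcal{C}^{(g-1)}\to U^r$. Inside it, I would consider the subvariety $\mathcal{Y}$ of effective divisors lying in $|L_t|$, together with the relative diagonals $\Delta_{\underline{k}}$ and $\Delta_{\underline{h}}$. Since $\underline{h}$ is obtained from $\underline{k}$ by forcing a single coincidence $q_i=q_{i'}$, $\Delta_{\underline{h}}\subset\Delta_{\underline{k}}$ is cut out by one equation and has codimension $1$.

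The expected-dimension hypothesis on $\mathcal{Z}$ lifts through the finite map $\mathcal{Y}\cap\Delta_{\underline{h}}\dashrightarrow\mathcal{G}^r_g(\underline{h})$ to an irreducible component $\mathcal{Z}_{\underline{h}}$ of $\mathcal{Y}\cap\Delta_{\underline{h}}$ of dimension $2g-3+n-\tfrac{r(r-1)}{2}$. Enclose $\mathcal{Z}_{\underline{h}}$ in an irreducible component $\mathcal{W}_{\underline{k}}\subset\mathcal{Y}\cap\Delta_{\underline{k}}$. Repeating the intersection count of Theorem \ref{theorem BOUND} yields the lower bound
\begin{equation*}
\dim\mathcal{W}_{\underline{k}}\geq\dim\mathcal{Y}+\dim\Delta_{\underline{k}}-\dim\mathcal{C}^{(g-1)}\geq 2g-2+n-\tfrac{r(r-1)}{2},
\end{equation*}
whereas the codimension-$1$ inclusion $\Delta_{\underline{h}}\subset\Delta_{\underline{k}}$ together with the fact that $\mathcal{Z}_{\underline{h}}$ is a component of $\mathcal{W}_{\underline{k}}\cap\Delta_{\underline{h}}$ forces the matching upper bound $\dim\mathcal{W}_{\underline{k}}\leq\dim\mathcal{Z}_{\underline{h}}+1$. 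Equality pins $\mathcal{W}_{\underline{k}}$ to its expected dimension and, simultaneously, forces its generic point to lie off $\Delta_{\underline{h}}$, so the corresponding $n$ marked points are distinct; projecting to $\mathcal{M}_{g,n}$ then supplies the desired component $\mathcal{W}\subset\mathcal{G}^r_g(\underline{k})$, with maximality following from Theorem \ref{theorem BOUND}.

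For the last assertion, under the additional hypothesis the fibre $C=C_0$ is non-hyperelliptic with $h^0(C,L)=r+1$: the hyperelliptic locus being closed in $\mathcal{M}_g$, non-hyperellipticity persists near $0\in U^r$, and upper semi-continuity of $t\mapsto h^0(C_t,L_t)$ combined with the definition of $\mathcal{S}_g^r$ pins $h^0=r+1$ generically. The main obstacle I anticipate is the simultaneous matching of the two opposite dimension bounds: the lower one needs Harris' estimate on $\dim U^r$ to remain tight after intersecting with $\mathcal{Y}$ and $\Delta_{\underline{k}}$, while the upper one needs $\mathcal{W}_{\underline{k}}$ not to collapse entirely into $\Delta_{\underline{h}}$ -- both properties being forced precisely by the assumption that $\mathcal{Z}$ already attains its expected dimension.
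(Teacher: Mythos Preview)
Your proposal is correct and follows essentially the same route as the paper: reduce to the one-step refinement $n=m+1$, work in the relative symmetric product over a versal deformation of the spin curve, sandwich $\dim\mathcal{W}_{\underline{k}}$ between the intersection-theoretic lower bound and the upper bound coming from $\Delta_{\underline{h}}\subset\Delta_{\underline{k}}$ being codimension one, and conclude via semi-continuity and closedness of the hyperelliptic locus. The only cosmetic difference is that the paper phrases the ``not collapsing into $\Delta_{\underline{h}}$'' step as the strict inequality $\dim\mathcal{W}_{\underline{k}}>\dim\mathcal{Z}_{\underline{h}}$ before invoking the codimension-one argument, whereas you present both bounds simultaneously.
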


\begin{remark}
We would like to note that the very same argument of Theorem \ref{theorem PARTITION} provides an alternative criterion for proving that the $\mathcal{G}^r_g(\underline{k})$'s admit components with expected dimension.
Namely, assume that $\mathcal{Z}\subset \mathcal{M}^r_g$ is an irreducible component having expected dimension, and for some $2\leq m\leq g-1$, let $\underline{k}^{g-1}=(1,\ldots,1)\preceq \underline{k}^{g-2}\preceq\dots\preceq \underline{k}^m$ be a chain of partitions of g-1 such that for all $g-1\geq j\geq m$, the sequence $\underline{k}^j$ consists of $j$ integers.
If for any such a $j$, there exists  $[C,p_1,\ldots,p_j]\in \mathcal{G}^r_g(\underline{k}^j)$ with $\pi_j([C,p_1,\ldots,p_j])\in \mathcal{Z}$, then each $\mathcal{G}^r_g(\underline{k}^j)$ admits a component having expected dimension.
In particular, if $m=g-1$, then the locus $\mathcal{G}^r_g(g-1)$ does.
\end{remark}

\subsection{Loci of subcanonical points}\label{subsection SUBCANONICAL}

Consider a smooth projective curve $C$ of genus $g\geq 2$.
We recall that a point $p\in C$ is a \emph{subcanonical point} if the line bundle $\mathcal{O}_C\left((2g-2)p\right)$ is isomorphic to the canonical bundle $\omega_C$, that is $[C,p]\in \mathcal{G}^r_g(g-1)$ for some $r\geq 0$.
We denote by $\mathcal{G}_g\subset \mathcal{M}_{g,1}$ the locus parameterizing pairs $[C,p]$ such that $p\in C$ is a subcanonical point, and we set hereafter $\mathcal{G}^r_g:=\mathcal{G}^r_g(g-1)$.

When $2\leq g\leq 3$, the loci of subcanonical points have been already described in Example \ref{example LOW GENERA}.
If instead $g\geq 4$, Kontsevich and Zorich \cite{KZ} proved that the locus $\mathcal{G}_{g}$ consists of three irreducible components having dimension  $2g-1$.
The component $\mathcal{G}_g^{\mathrm{hyp}}$ parameterizes hyperelliptic curves endowed with a Weierstrass point (cf. Example \ref{example HYPERELLIPTIC}), whereas the remaining components, $\mathcal{G}_g^{\mathrm{odd}}$ and $\mathcal{G}_g^{\mathrm{even}}$, are described by non-hyperelliptic curves such that $h^0\left(C,\mathcal{O}_C\left((g-1)p\right)\right)$ is odd and even, respectively.
Moreover, Bullock \cite{Bu} showed that the general point $[C,p]\in\mathcal{G}_g^{\mathrm{odd}}$ (resp. $[C,p]\in\mathcal{G}_g^{\mathrm{even}}$) is such that  $h^0\left(C,\mathcal{O}_C\left((g-1)p\right)\right)=1$ (resp. $h^0\left(C,\mathcal{O}_C\left((g-1)p\right)\right)=2$).

By collecting these facts, \cite[Theorems 1.2 and 1.3]{BP} and \cite[Corollary 4.5]{BP}, we obtain the following.
\begin{proposition}\label{proposition LOW DIMENSION}
Let $0\leq r\leq 3$ and let $g(r)$ be defined by (\ref{equation g(r)}).
For any genus $g\geq g(r)$, there exists an irreducible component $\mathcal{Z}\subset\mathcal{G}_{g}^r$ having expected dimension.
Moreover, the general point $[C,p]\in \mathcal{Z}$ satisfies $h^0\left(C,\mathcal{O}_C\left((g-1)p\right)\right)=r+1$ and, apart from the cases $(r,g)=(0,2)$ and $(1,3)$, the curve $C$ is non-hyperelliptic.
\end{proposition}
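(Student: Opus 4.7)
The approach is to assemble the facts listed just before the statement, splitting the argument according to the four values of $r$. With $n=1$ and $\underline{k}=(g-1)$, Theorem~\ref{theorem BOUND} prescribes the expected dimension $2g-1-\tfrac{r(r-1)}{2}$, which specializes to $2g-1$ for $r\in\{0,1\}$, to $2g-2$ for $r=2$, and to $2g-4$ for $r=3$. For each pair $(r,g)$ with $g\geq g(r)$ the plan is to exhibit an irreducible component $\mathcal{Z}\subset\mathcal{G}_g^r$ meeting this dimension, and to verify the claims on non-hyperellipticity and on $h^0$ at a general point.

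For $r=0$ and $g=2$, Example~\ref{example LOW GENERA} identifies $\mathcal{G}_2^0(1)$ as the irreducible $3$-dimensional divisor of $\mathcal{M}_{2,1}$ of hyperelliptic Weierstrass points, with $h^0=1$; for $r=1$ and $g=3$, the same example yields the $5$-dimensional irreducible locus $\mathcal{G}_3^1(2)$ of Weierstrass points on hyperelliptic curves, with $h^0=2$. For $r=0$ and $g=3$, Example~\ref{example LOW GENERA} provides the non-hyperelliptic $5$-dimensional component $\mathcal{G}_3^0(2)$ of plane quartics admitting a $4$-inflection point, for which $h^0=1$. For $r\in\{0,1\}$ and $g\geq 4$, I would invoke the Kontsevich--Zorich classification of $\mathcal{G}_g$ recalled just before the statement: the non-hyperelliptic components $\mathcal{G}_g^{\mathrm{odd}}$ and $\mathcal{G}_g^{\mathrm{even}}$ both have dimension $2g-1$, and Bullock's theorem yields that at a general point $h^0=1$ and $h^0=2$ respectively. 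Setting $\mathcal{Z}=\mathcal{G}_g^{\mathrm{odd}}$ when $r=0$ and $\mathcal{Z}=\mathcal{G}_g^{\mathrm{even}}$ when $r=1$ closes these cases.

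For $r=2$ (so $g(r)=6$) and $r=3$ (so $g(r)=9$), the existence at the base genus $g=g(r)$ of an irreducible component of $\mathcal{G}_{g(r)}^r$ of expected dimension, with non-hyperelliptic general member and $h^0=r+1$, is exactly the content of \cite[Theorems~1.2 and~1.3]{BP}. To propagate this to every $g\geq g(r)$, I would then apply \cite[Corollary~4.5]{BP}, which smooths such a base component into a component of $\mathcal{G}_g^r$ of expected dimension while preserving both non-hyperellipticity and the value of $h^0$ at a general point. The only point requiring a touch of care is the persistence of $h^0=r+1$ through the smoothing: this follows from upper semi-continuity of $t\mapsto h^0(C_t,L_t)$ combined with the dimension count, since any strict jump (forced by parity to raise $h^0$ by at least $2$) would embed $\mathcal{Z}$ into $\mathcal{G}_g^{r+2}$ and violate the codimension bound of Theorem~\ref{theorem BOUND}.
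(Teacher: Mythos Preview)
Your overall strategy matches the paper's exactly: the proposition is stated there without proof, as an assembly of the Kontsevich--Zorich/Bullock facts recalled just before it together with \cite[Theorems~1.2, 1.3 and Corollary~4.5]{BP}. Your case split for $r\in\{0,1\}$ and the appeal to \cite{BP} for $r\in\{2,3\}$ are precisely what the paper has in mind.

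There is, however, a genuine error in your final paragraph. The dimension argument you give for the persistence of $h^0=r+1$ under smoothing does not work: Theorem~\ref{theorem BOUND} gives an \emph{upper} bound on codimension, i.e.\ a \emph{lower} bound on the dimension of each component of $\mathcal{G}_g^{r+2}$. If the generic point of $\mathcal{Z}$ had $h^0\geq r+3$, then $\mathcal{Z}$ would be a component of $\mathcal{G}_g^{r+2}$ of dimension $2g-1-\tfrac{r(r-1)}{2}$, which is \emph{larger} than the expected dimension $2g-1-\tfrac{(r+2)(r+1)}{2}$; this is perfectly compatible with Theorem~\ref{theorem BOUND} and yields no contradiction. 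The inequality points the wrong way. In the paper, the control of $h^0$ through the inductive smoothing is obtained not by a dimension count but by tracking vanishing sequences of the canonical series via limit linear series on the nodal curve $C\cup_q E$ (see the proof of Theorem~\ref{theorem SMOOTHING g}); this, together with the parity constraint, is what pins down $h^0$ exactly. If you take \cite[Corollary~4.5]{BP} as a black box that already includes the $h^0$ statement, your proof stands; but the justification you sketch for that step should be replaced by the limit-linear-series argument.
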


The following example describes the case $r=2$ with $g=g(r)=6$, and it provides the base case of the induction on $r$ we shall perform in the next section.
\begin{example}[$r=2$ and $g=6$]\label{example G_6^2}
It is well known that the locus $\mathcal{M}^2_6$ is irreducible of pure dimension $12$, and its general point parameterizes a smooth curve $C\subset \mathbb{P}^2$ of degree $5$ (see e.g. \cite[p. 293]{Na}).
Moreover, if $C\subset \mathbb{P}^2$ is such a curve, we have that $\omega_C\cong \mathcal{O}_C(2)$, and $L:=\mathcal{O}_C(1)$ is the unique theta-characteristic on $C$ with $h^0(C,L)=3$.
Therefore, given a plane quintic curve $C\subset \mathbb{P}^2$ and a partition $\underline{k}=(k_1,\ldots,k_n)$ of $g-1=5$, we have $[C,p_1,\ldots,p_n]\in \mathcal{G}^2_6(\underline{k})$ if and only if the divisor $k_1 p_1+\dots +k_n p_n$ is cut out on $C$ by some line $\ell\subset \mathbb{P}^2$.
In particular, it follows from \cite[Section 4.5]{Bu} that the locus of subcanonical points $\mathcal{G}^2_6\subset \mathcal{M}_{6,1}$ admits an irreducible component $\mathcal{Z}_6$ with expected dimension $2g-2=10$, whose general point $[C,p]$ parameterizes a smooth (non-hyperelliptic) quintic curve $C\subset \mathbb{P}^2$ having a 5-fold inflection point at $p\in C$, and $h^0\left(C,O_C\left((g-1)p\right)\right)=3$.
\end{example}

Before stating the last result of this section, we need to recall few basic facts concerning Eisenbud-Harris' theory of limit linear series (see \cite{EH1, EH2}).
Let ${\mathfrak{l}=(L,V)}$ be a $\mathfrak{g}^r_d$ on the curve $C$, that is a line bundle $L$ of degree $d$ endowed with a $(r+1)$-dimensional subspace ${V\subset H^0(C,L)}$.
Given a point $p\in C$, the set $\displaystyle \left\{\ord_p(s)\left|s\in V\right.\right\}$ of orders of vanishing of sections of $V$ at $p$ consists of exactly $r+1$ distinct integers ${0\leq a_0^\mathfrak{l}(p)<a_1^\mathfrak{l}(p)<\dots<a_r^\mathfrak{l}(p)\leq d}$, and the sequence ${\underline{a}^\mathfrak{l}(p):=\left(a_0^\mathfrak{l}(p),a_1^\mathfrak{l}(p),\dots,a_r^\mathfrak{l}(p)\right)}$ is called the \emph{vanishing sequence} of $\mathfrak{l}$ at $p$.
When $\mathfrak{l}$ is the complete canonical series $K_C:=\left(\omega_C, H^0(C, \omega_C)\right)$, the sequence $\left(t_0(p),\dots,t_{g-1}(p)\right)$ such that $t_i(p):=a_i^{K_C}(p)+1$ equals the \emph{sequence of Weierstrass gaps} of $p$, which is the increasing sequence of integers $\displaystyle \left\{0\leq n\leq 2g-1\left|\,h^0\left(C, \mathcal{O}_C((n-1)p)\right)=h^0\left(C, \mathcal{O}_C(n p)\right)\right.\right\}$.
In particular, a point $p\in C$ is subcanonical if and only if $a_{g-1}^\mathfrak{l}(p)=2g-2$.
Furtermore,  $h^0\left(C, \mathcal{O}_C((g-1)p)\right)=r+1$ for some $r\geq 0$, if and only if the set $\displaystyle \left\{i\, \left|\, a_i^\mathfrak{l}(p)\leq g-2\right.\right\}$ consists of exactly $g-1-r$ elements.

So we consider a reducible curve $X=C\cup_q E$ consisting of two smooth curves $C$ and $E$ of genus $g-1$ and $1$ respectively, meeting at a single ordinary node $q$. A \emph{(refined) limit} $\mathfrak{g}^r_d$ on $X$ is
a collection $\mathfrak{l}={\left\{\left(L_{C}, V_{C}\right),\left(L_{E}, V_{E}\right)\right\}}$, where $\mathfrak{l}_{C}:=\left(L_{C}, V_{C}\right)$ and $\mathfrak{l}_{E}:=\left(L_{E}, V_{E}\right)$ are $\mathfrak{g}^r_d$ on $C$ and $E$ respectively, such that the vanishing sequences $\underline{a}^{\mathfrak{l}_{C}}(q)$ and $\underline{a}^{\mathfrak{l}_{E}}(q)$ satisfy the \emph{compatibility conditions} $a_i^{\mathfrak{l}_{C}}(q) + a_{r-i}^{\mathfrak{l}_{E}}(q)= d$ for any $0\leq i\leq r$.

The following theorem follows from \cite[Theorem 4.1 and Corollary 4.4]{BP}, and it shall be involved in the proof of Theorem \ref{theorem EXP DIM}.
\begin{theorem}\label{theorem SMOOTHING g}
Let $r\geq 1$ and assume that there exists an integer $g(r)$ such that $\mathcal{G}^r_{g(r)}$ admits an irreducible component $\mathcal{Z}_{g(r)}$ having expected dimension.
Then for any $g\geq g(r)$, there exists an irreducible component $\mathcal{Z}_{g}$ of $\mathcal{G}^r_{g}$ having expected dimension, as well.\\
Furthermore, if the general point $[C,p]\in\mathcal{Z}_{g(r)}$ satisfies $h^0\left(C,O_C\left((g(r)-1)p\right)\right)=r+1$, then also the general point of $\mathcal{Z}_{g}$ does.
\begin{proof}
The first assertion of the theorem follows straightforwardly from \cite[Theorem 4.1]{BP}.
On the other hand, we want to prove that the general point $[C,p]\in\mathcal{Z}_{g}$ satisfies $h^0\left(C,O_C\left((g-1)p\right)\right)=r+1$.
When $g=g(r)$ the statement is true by assumption. Hence we argue by induction on $g$, and we assume that the assertion holds true up to $g-1$.\\
As in the proof of \cite[Corollary 4.4]{BP}, there exists a family $\left(\mathcal{X}\stackrel{\phi}{\longrightarrow} T, T\stackrel{\rho}{\longrightarrow} \mathcal{X}\right)$ of smooth pointed curves such that $[X_t,p_t]\in \mathcal{Z}_g$ for any $t\neq 0$, and the central fibre $(X_0,p_0)=(C\cup_q E, p)$ is given by a curve $[C,q]\in \mathcal{Z}_{g-1}$ attached at an ordinary node $q$ to an elliptic curve $E$, where $(p-q)\in E$ is a $(2g-2)$-torsion point.
Furthermore, the canonical bundles $\omega_{X_t}\cong \mathcal{O}_{X_t}\left((2g-2)p_t\right)$ induce a limit $\mathfrak{g}^{g-1}_{2g-2}$ on $C\cup_q E$,
$\mathfrak{l}:=\left\{\left(L_{C},V_{C}\right),\left(L_{E},V_{E}\right)\right\}$, where
$L_{C}\cong \omega_C(2q)\cong \mathcal{O}_{C}\left((2g-2)q\right)$ and $L_{E}\cong \mathcal{O}_{E}\left((2g-2)q\right)$.
In particular, since $\mathfrak{l}_C:=\left(L_{C}, V_{C}\right)$ is a base-point-free complete linear series, if $\underline{a}^{K_C}(q):=(a_0,\ldots,a_{g-2})$ is the vanishing sequence of the canonical linear series $K_C$ at the subcanonical point $q\in C$, then $\underline{a}^{\mathfrak{l}_C}(q)=(0,a_0+2,\ldots,a_{g-2}+2)$, where $a_{g-2}+2=2g-2$.
We want to prove that $h^0\left(X_t, \mathcal{O}_{X_t}((g-1)p_t)\right)= r+1$ for general $t\in T$.\\
Compatibility conditions on limit linear series give $\underline{a}^{\mathfrak{l}_E}(q)=(2g-4-a_{g-2},\ldots,2g-4-a_{g-2},2g-2)$.
Moreover, vanishing sequences of $\mathfrak{l}_E$ satisfy $a_i^{\mathfrak{l}_{E}}(p) + a_{g-1-i}^{\mathfrak{l}_{E}}(q)\leq 2g-2$ for any $0\leq i\leq g-1$.
Therefore $\underline{a}^{\mathfrak{l}_E}(p)\leq(0,a_0+2,\ldots,a_{g-2}+2)$ and, by upper semi-continuity of vanishing sequences, we deduce $\underline{a}^{K_{X_t}}(p_t)\leq(0,a_0+2,\ldots,a_{g-2}+2)$ for general $t\in T$.
We recall that $[X_t,p_t]\in \mathcal{G}^r_{g}$ for $t\neq 0$, hence $p_t\in X_t$ is a subcanonical point such that $h^0\left(X_t, \mathcal{O}_{X_t}((g-1)p_t)\right)\geq r+1$ and  $h^0\left(X_t, \mathcal{O}_{X_t}((g-1)p_t)\right)\equiv r+1\,(\mathrm{mod}\,2)$.
Besides, $h^0\left(C,O_C\left((g-2)p\right)\right)=r+1$ by induction, hence the set $\left\{i\, \left|\, a_i\leq g-3\right.\right\}$ consists of exactly $g-2-r$ elements.
Let $\underline{a}^{K_{X_t}}(p_t):=(c_0,\ldots,c_{g-1})$ for general $t\in T$, and let $|\,\cdot\,|$ denote the cardinality of a set.
Then
\begin{displaymath}
\displaystyle \left|\left\{j\, \left|\, c_j\leq g-2\right.\right\}\right|\geq 1+ \left|\left\{i\, \left|\, a_i+2\leq g-2\right.\right\}\right|\geq 1+ \left|\left\{i\, \left|\, a_i\leq g-3\right.\right\}\right|-1=g-1-(r+1).
\end{displaymath}
Thus $r+1\leq h^0\left(X_t, \mathcal{O}_{X_t}((g-1)p_t)\right)\leq r+2$, and the assertion follows as $h^0\left(X_t, \mathcal{O}_{X_t}((g-1)p_t)\right)\equiv r+1\,(\mathrm{mod}\,2)$.
\end{proof}
\end{theorem}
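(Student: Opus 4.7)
The first assertion is explicitly credited to \cite[Theorem 4.1]{BP}, so the real content to verify is that the equality $h^{0}(C,\mathcal{O}_{C}((g(r)-1)p))=r+1$ propagates from the base case to every $g\geq g(r)$. My plan is to induct on $g$, taking $g=g(r)$ as the base case provided by the hypothesis. For the inductive step from $g-1$ to $g$, I would appeal to the one-step smoothing construction underlying \cite[Corollary 4.4]{BP} to build a family $\phi\colon \mathcal{X}\to T$ of smooth pointed curves $(X_{t},p_{t})$ with $[X_{t},p_{t}]\in\mathcal{Z}_{g}$ for $t\neq 0$, whose central fibre is a nodal curve $X_{0}=C\cup_{q}E$ obtained by gluing an elliptic curve $E$ to a pair $[C,q]\in\mathcal{Z}_{g-1}$ along an ordinary node, where $p\in E$ is chosen so that $(p-q)$ is a $(2g-2)$-torsion point. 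By the inductive hypothesis one has $h^{0}(C,\mathcal{O}_{C}((g-2)q))=r+1$, and I wish to transfer this to $h^{0}(X_{t},\mathcal{O}_{X_{t}}((g-1)p_{t}))=r+1$ for general $t$.

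The key step will be bookkeeping with vanishing sequences in the Eisenbud--Harris framework. The relative canonical bundles $\omega_{X_{t}}\cong \mathcal{O}_{X_{t}}((2g-2)p_{t})$ specialize to a limit $\mathfrak{g}^{g-1}_{2g-2}$ on $X_{0}$ whose $C$-aspect is $\mathcal{O}_{C}((2g-2)q)\cong \omega_{C}(2q)$, a base-point-free complete series. Writing $\underline{a}^{K_{C}}(q)=(a_{0},\ldots,a_{g-2})$ for the canonical vanishing sequence at $q$ (so that the inductive hypothesis becomes $\#\{i\mid a_{i}\leq g-3\}=g-2-r$), the $C$-aspect has vanishing sequence $(0,a_{0}+2,\ldots,a_{g-2}+2)$ at $q$, and compatibility forces the $E$-aspect's vanishing sequence at $q$. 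The standard inequality $a^{\mathfrak{l}_{E}}_{i}(p)+a^{\mathfrak{l}_{E}}_{g-1-i}(q)\leq 2g-2$ then bounds $\underline{a}^{\mathfrak{l}_{E}}(p)$ termwise by $(0,a_{0}+2,\ldots,a_{g-2}+2)$, and by upper semicontinuity of vanishing sequences the same termwise bound holds for $\underline{a}^{K_{X_{t}}}(p_{t})$ at general $t\in T$.

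Translating back through the correspondence between vanishing orders and Weierstrass gaps, the termwise bound forces at least $g-1-(r+1)$ of the entries $a^{K_{X_{t}}}_{j}(p_{t})$ to be at most $g-2$, which gives $h^{0}(X_{t},\mathcal{O}_{X_{t}}((g-1)p_{t}))\leq r+2$. Combined with the inclusion $[X_{t},p_{t}]\in\mathcal{G}^{r}_{g}$, which supplies both the lower bound $h^{0}\geq r+1$ and the parity constraint $h^{0}\equiv r+1\pmod{2}$, the two-element range collapses to the desired equality $h^{0}=r+1$. I expect the main friction of the argument to be the careful off-by-one accounting required to pass the inductive hypothesis on $C$ (counting $a_{i}$'s at most $g-3$) over to the statement on $X_{t}$ (counting $c_{j}$'s at most $g-2$), together with the tracking of the double base point at $q$ that shifts the $C$-aspect's vanishing sequence by two.
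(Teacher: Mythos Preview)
Your proposal is correct and follows essentially the same approach as the paper's proof: induction on $g$ via the one-step smoothing from \cite[Corollary 4.4]{BP}, limit linear series bookkeeping on the vanishing sequences of the $C$- and $E$-aspects, the termwise bound $\underline{a}^{K_{X_t}}(p_t)\leq(0,a_0+2,\ldots,a_{g-2}+2)$ by semicontinuity, and the final count combined with the parity constraint to collapse $r+1\leq h^0\leq r+2$ to equality. Even the anticipated ``friction'' points (the off-by-one in passing from $a_i\leq g-3$ to $c_j\leq g-2$ and the $+2$ shift from the base point at $q$) are exactly the delicate spots in the paper's argument.
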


\section{Existence of components with expected dimension}\label{section EXP DIM}

In this section we shall prove Theorem \ref{theorem EXP DIM}.
In the light of Theorem \ref{theorem PARTITION}, Proposition \ref{proposition LOW DIMENSION}, and Theorem \ref{theorem SMOOTHING g}, we essentially need to show that for any $r\geq 4$ and $g(r):= {r+2\choose 2}$, there exists an irreducible component $\mathcal{Z}_{g(r)}$ with expected dimension of the locus $\mathcal{G}^r_{g(r)}$ of subcanonical points, and its general point $[C,p]\in \mathcal{Z}_{g(r)}$ satisfies $h^0\left(C,\mathcal{O}_C\left(\left(g(r)-1\right)p\right)\right)=r+1$.
Large part of the section shall indeed be devoted to prove this fact, which is included in Theorem \ref{theorem SMOOTHING r}.
Actually, the proof shall be mainly set in the Hilbert scheme $\Hilb^r_{g(r),g(r)-1}$ of curves of arithmetic genus $g(r)$ and degree $g(r)-1$ in $\mathbb{P}^r$, with at most nodes as singularities.

Section \ref{subsection COMPONENTS OF M^r_g} shall concern the main results of \cite{Be1}, assuring the existence of an irreducible component $W^r_{g(r)}\subset \Hilb^r_{g(r),g(r)-1}$ whose general point is a smooth curve $C\subset \mathbb{P}^r$ such that $\mathcal{O}_C(1)$ is a large theta-characteristic.
Moreover, we shall slightly improve the description of curves parameterized over $W^r_{g(r)}$.
In Section \ref{subsection LIMIT SUBCANONICAL} we shall extend the notion of 'subcanonical point' to singular curves in $W^r_{g(r)}$, and we shall focus on certain reducible curves admitting such a `limit subcanonical point'.
In Section \ref{subsection SMOOTHING LIMIT SUBCANONICAL} we shall prove that those reducible curves can be deformed to smooth curves endowed with a subcanonical point, and we shall achieve the existence of components $\mathcal{Z}_{g(r)}$ as above.
Finally, Section \ref{subsection PROOF OF EXP DIM} shall be addressed to conclude the proof of Theorem \ref{theorem EXP DIM}.

\subsection{Components of $\mathcal{M}_{g}^r$ having expected dimension}\label{subsection COMPONENTS OF M^r_g}

We summarize and slightly improve some of the results included in \cite{Be1}, where the sharpness of Harris' bound is proved.

For a fixed integer $r\geq 2$, we set hereafter $g=g(r):={r+2\choose 2}$, and we consider the locus $\mathcal{S}^r_{g}$ of pairs $[C,L]$ such that $L$ is a large theta-characteristic on $C$.
Thanks to \cite[Theorem 3.3]{Be1}, there exists an irreducible component $\mathcal{V}_{g}^r\subset \mathcal{S}_{g}^r$ having dimension $3g-3-\frac{r(r+1)}{2}$, whose image under the finite morphism $\varphi\colon \mathcal{S}_g\longrightarrow  \mathcal{M}_g$ is an irreducible component of $\mathcal{M}^r_g$ attaining equality in Harris' bound.
In addition, for a general point $[C,L] \in \mathcal{V}_{g}^r$, the line bundle $L$ is very ample with $h^0(C,L)= r+1$, and the image of the embedding $C\stackrel{\varphi_{|L|}}{\longrightarrow}\mathbb{P}^r$ is a smooth curve---which we still denote by $C\subset \mathbb{P}^r$--- of degree $g-1$ such that the normal bundle $N_{C/\mathbb{P}^r}$ of $C$ in $\mathbb{P}^r$ satisfies $h^1(C,N_{C/\mathbb{P}^r})=0$.

So we consider the Hilbert scheme $\Hilb^{r}_{g,g-1}$ parameterizing reduced curves of arithmetic genus $g$ and degree $g-1$ in $\mathbb{P}^r$ with at most nodes as singularities.
Given a general point $[C,L] \in \mathcal{V}_{g}^r$  and the corresponding embedded curve $C\subset \mathbb{P}^r$, we denote by  $W^r_{g}\subset \Hilb^{r}_{g,g-1}$ the (unique) irreducible component containing the point $[C]$, and by $W^{r}_{g,\text{sm}} \subset W_{g}^r$ the dense open subset parameterizing smooth curves.
The following result is a simple consequence of \cite[Proposition 2.7]{Be2}.
\begin{proposition}\label{proposition W^r_g,sm}
The component $W^r_{g}\subset \Hilb^{r}_{g,g-1}$ has dimension $\dim W^r_{g}=3g-4+{r+2 \choose 2}$, and $[\Gamma,\mathcal{O}_\Gamma(1)]\in \mathcal{S}_{g}^r$ for any smooth curve $[\Gamma]\in W^{r}_{g(r),\text{sm}}$.
\begin{proof}
By \cite[Proposition 2.7]{Be2}, we have that $\dim W^r_{g}=3g-4+{r+2 \choose 2}$, and for general $[C]\in W^{r}_{g,\text{sm}}$, the line bundle $\mathcal{O}_C(1)$ is a very ample theta-characteristic such that $[C,\mathcal{O}_C(1)]\in \mathcal{S}_{g(r)}^r$.
Then, we consider the universal family $\mathcal{C}\subset \mathbb{P}^{r}\times W^{r}_{g,\text{sm}}\xrightarrow{\psi} W^{r}_{g,\text{sm}}$, together with the relative hyperplane bundle $\mathcal{O}_\mathcal{C}(1)$ and the relative quadric bundle $\mathcal{O}_\mathcal{C}(2)$, whose restrictions to the general fibre $C$ of $\psi$ are $\mathcal{O}_C(1)$ and $\mathcal{O}_C(2)$, respectively.
Since $\mathcal{O}_C(1)$ is a theta-characteristic, the canonical bundle on $C$ is $\omega_C\cong \mathcal{O}_C(2)$, and hence $h^1\left(C,\mathcal{O}_C(2)\right)=1$.
By upper semi-continuity of cohomology of $\mathcal{O}_\mathcal{C}(2)$, we deduce that $h^1\left(\Gamma,\mathcal{O}_\Gamma(2)\right)\geq 1$ for any  $[\Gamma]\in W^{r}_{g,\text{sm}}$.
We note that $\deg \mathcal{O}_\Gamma(2)=2g-2$, as $\deg \Gamma=g-1$.
Thus Riemann-Roch theorem assures that  $h^1\left(\Gamma,\mathcal{O}_\Gamma(2)\right)= 1$, and $\mathcal{O}_\Gamma(2)\cong\omega_\Gamma$.
In particular, $\mathcal{O}_\Gamma(1)$ is a theta-characteristic, and the assertion follows.
\end{proof}
\end{proposition}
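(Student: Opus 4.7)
The plan is to address the two assertions separately. For the dimension, I would exploit the natural forgetful rational map $W^r_g \dashrightarrow \mathcal{V}^r_g$ sending an embedded curve $[\Gamma] \subset \mathbb{P}^r$ to the spin pair $[\Gamma, \mathcal{O}_\Gamma(1)]$. On the non-empty open subset where $\mathcal{O}_\Gamma(1)$ is very ample with exactly $r+1$ global sections, which is ensured by the construction of \cite{Be1}, two embeddings of the same abstract curve via its complete linear series differ only by an element of $\mathrm{PGL}(r+1)$. Hence the generic fibre has dimension $(r+1)^2 - 1$, and adding this to $\dim \mathcal{V}^r_g = 3g - 3 - \binom{r+1}{2}$ yields the desired value $\dim W^r_g = 3g - 4 + \binom{r+2}{2}$, via the combinatorial identity $(r+1)^2 - \binom{r+1}{2} = \binom{r+2}{2}$.

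For the second assertion, I would analyse the universal family $\psi \colon \mathcal{C} \to W^r_{g,\text{sm}}$ equipped with its relative quadric bundle $\mathcal{O}_\mathcal{C}(2)$. On a general fibre $C$ the bundle $\mathcal{O}_C(1)$ lies in $\mathcal{V}^r_g$, so $\mathcal{O}_C(2) \cong \omega_C$ and in particular $h^1(C, \mathcal{O}_C(2)) = 1$. Upper semi-continuity of $h^1$ along $\psi$ then propagates the inequality $h^1(\Gamma, \mathcal{O}_\Gamma(2)) \geq 1$ to every smooth fibre. Since $\deg \mathcal{O}_\Gamma(2) = 2g - 2 = \deg \omega_\Gamma$, Riemann--Roch combined with Serre duality produces a non-zero global section of the degree-zero line bundle $\omega_\Gamma \otimes \mathcal{O}_\Gamma(-2)$, which must therefore be trivial. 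Thus $\mathcal{O}_\Gamma(1)^{\otimes 2} \cong \omega_\Gamma$ for every smooth $[\Gamma]$.

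To upgrade this to $[\Gamma, \mathcal{O}_\Gamma(1)] \in \mathcal{S}^r_g$, it remains to verify $h^0(\Gamma, \mathcal{O}_\Gamma(1)) \geq r+1$ and the parity condition. The first follows from upper semi-continuity of $h^0$ of the relative hyperplane bundle, since the general value is already $r+1$; the second is supplied by Mumford's parity theorem, which guarantees that the residue of $h^0$ modulo $2$ is locally constant in any flat family of theta-characteristics. The main delicate step is the cohomological identification of $\mathcal{O}_\Gamma(2)$ with $\omega_\Gamma$ on every smooth fibre, using the precise degree equality and Serre duality; once this is in place, the remaining conditions defining $\mathcal{S}^r_g$ follow formally.
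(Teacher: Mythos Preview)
Your proposal is correct and follows essentially the same route as the paper. The core of the second assertion---upper semi-continuity of $h^1$ for the relative quadric bundle, the degree count $\deg\mathcal{O}_\Gamma(2)=2g-2$, and the Serre-duality/Riemann--Roch step forcing $\omega_\Gamma\cong\mathcal{O}_\Gamma(2)$---matches the paper's argument verbatim; you are in fact more explicit than the paper in spelling out the final two conditions ($h^0\geq r+1$ via semi-continuity and parity via Mumford), which the paper leaves implicit in ``the assertion follows.'' The only genuine difference is the dimension count: the paper simply cites \cite[Proposition~2.7]{Be2}, whereas you recover it from the $\mathrm{PGL}(r+1)$-fibration $W^r_{g,\mathrm{sm}}\dashrightarrow\mathcal{V}^r_g$; this is exactly the content of the paper's Remark immediately following the proposition, so the two approaches are interchangeable.
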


\begin{remark}\label{remark MODULAR MAP}
In the light of Proposition \ref{proposition W^r_g,sm}, there is a natural modular map $\nu\colon W^{r}_{g,\text{sm}}\longrightarrow \mathcal{V}_{g}^r$ sending $[\Gamma]\in W^{r}_{g,\text{sm}}$ to $[\Gamma,\mathcal{O}_\Gamma(1)]\in \mathcal{S}_{g}^r$.
We point out that, since any smooth curve possesses finitely many theta-characteristics, the dimension of the fibres of $\nu$ equals the dimension of the space $\mathrm{PGL}(r+1)$ of projective transformations of $\mathbb{P}^{r}$, according to the fact that $\dim W^{r}_{g,\text{sm}}=\dim \mathcal{V}_{g}^r+(r+1)^2-1$.
\end{remark}

Turning to singular curves contained in $W^{r}_{g}=W^{r}_{g(r)}$, it follows from \cite[Section 3]{Be1} that for any $r\geq 3$, the component $W^{r}_{g}$ contains the locus $W^{r}_{g,h}$ described by all the reducible nodal curves $X=C \cup E\subset \mathbb{P}^{r}$ such that
\begin{itemize}
  \item[(\emph{i})] $C$ lies into a hyperplane $H\subset \mathbb{P}^{r}$ and, under the isomorphism $H\cong\mathbb{P}^{r-1}$, $[C]\in W^{r-1}_{g(r-1),\text{sm}}$;
  \item[(\emph{ii})] $E$ is an elliptic normal curve of degree $g(r)-g(r-1)=r+1$;
  \item[(\emph{iii})] $C$ and $E$ meet transversally at the $0$-dimensional scheme of length $r+1$ cut out by $H$ on $E$.
\end{itemize}
Furthermore, the locus $W^{r}_{g,h}$ is a divisorial component of $W^r_{g}$. Namely,
\begin{proposition}\label{proposition W0rg}
 For any $r\geq 3$ and $g={r+2 \choose 2}$, the locus $W^{r}_{g,h}$ is equidimensional, and it has codimension 1 in $W^r_{g}$.
\begin{proof}
We want to compute the number $n$ of parameters a general curve $X:=C \cup E$ parameterized by any irreducible component of $W^r_{g,h}$ depends on.
Let $S:=C \cap E$ be the set of singularities of $X$, let $\mathcal{E} \subset \Hilb^{r}_{1,r+1}$ be the subscheme parameterizing smooth elliptic normal curves containing $S$, and let $R \subset \mathcal{E}$ be the irreducible component parameterizing $[E]\in \Hilb^{r}_{1,r+1}$.
By \cite[Lemma 3.1]{Be1}, the normal bundle $N_{E/\mathbb{P}^{r}}$ of $E$ in $\mathbb{P}^r$ satisfies $h^1(E, N_{E/\mathbb{P}^{r}}(-S))=0$.
Thus \cite[Lemma 2.4]{BaBeF} assures that $R$ is smooth at $[E]$. Since, by the same Lemma, the tangent space to $R$ at $[E]$ is isomorphic to $H^{0}(E,N_{E/\mathbb{P}^{r}}(-S))$, one has $\dim R=h^{0}(E,N_{E/\mathbb{P}^{r}}(-S))=h^{0}(E,N_{E/\mathbb{P}^{r}})-(r-1)(r+1)=\dim_{[E]}\text{Hilb}^{r}_{1,r+1}-(r-1)(r+1)=(r+1)^2-(r^2-1)=2r+2$.
As a consequence
\begin{align*}
n    &  =\underbrace{3(g-r-1)-4+{r+1 \choose 2}}_{\dim W^{r-1}_{g-r-1}} + \underbrace{r}_{\textrm{choice of } H \subset \mathbb{P}^{r}} + \underbrace{r+1}_{\textrm{ choice of } S} + \underbrace{2r+2}_{\dim R}=\\
 &  =3g-4+{r+2 \choose 2}-1=\dim W^r_{g}-1.
%conti ricontrollati, tutto bene.
\end{align*}
\end{proof}
\end{proposition}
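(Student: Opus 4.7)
The plan is to exhibit $W^r_{g,h}$ as the image of an explicit parameter space whose dimension can be computed, and then to check that this count equals $\dim W^r_g-1=3g-5+\binom{r+2}{2}$. Because the tautological map from the parameter space to $W^r_{g,h}$ will have finite fibres once the data are suitably rigidified, equidimensionality will follow from the count itself, and the codimension-one claim will then be immediate from Proposition \ref{proposition W^r_g,sm}.

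I would decompose the data of a general $X=C\cup E\in W^r_{g,h}$ into four mutually independent choices: (a) the embedded smooth curve $[C]\in W^{r-1}_{g(r-1),\mathrm{sm}}$ in a hyperplane $H\cong\mathbb{P}^{r-1}$; (b) the hyperplane $H\subset\mathbb{P}^r$ itself, for $r$ parameters; (c) the length-$(r+1)$ node scheme $S=C\cap E$ on $C$, for $r+1$ parameters; (d) an elliptic normal curve $E\subset\mathbb{P}^r$ passing through $S$. The contribution of (a) is $3(g-r-1)-4+\binom{r+1}{2}$, obtained by invoking Proposition \ref{proposition W^r_g,sm} with rank $r-1$ and genus $g(r-1)=g-r-1$.

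The heart of the argument is (d). Let $R$ denote the irreducible component of $\Hilb^r_{1,r+1}$ containing $[E]$. I would show $R$ is smooth at $[E]$ of dimension $2r+2$ by invoking the vanishing $h^1(E,N_{E/\mathbb{P}^r}(-S))=0$ from \cite[Lemma 3.1]{Be1} and \cite[Lemma 2.4]{BaBeF}, which together identify $T_{[E]}R\cong H^0(E,N_{E/\mathbb{P}^r}(-S))$. From the short exact sequence
\begin{equation*}
0\longrightarrow N_{E/\mathbb{P}^r}(-S)\longrightarrow N_{E/\mathbb{P}^r}\longrightarrow N_{E/\mathbb{P}^r}|_S\longrightarrow 0,
\end{equation*}
together with $h^0(E,N_{E/\mathbb{P}^r})=(r+1)^2$ (the expected Hilbert-scheme dimension at a smooth point of an elliptic normal curve, realized here) and $\operatorname{length}(N_{E/\mathbb{P}^r}|_S)=(r-1)(r+1)$, one reads off $\dim R=2r+2$. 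Adding the four contributions and using $\binom{r+2}{2}-\binom{r+1}{2}=r+1$ then yields exactly $\dim W^r_g-1$.

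The delicate part, and the step I expect to cost the most, is verifying that this count really gives the dimension of each irreducible component of $W^r_{g,h}$ rather than just an upper bound. Concretely, one must check that a general tuple $(C,H,S,E)$ produces a genuinely reduced nodal curve $X=C\cup E$ with precisely $r+1$ nodes — i.e.\ that $C$ and $E$ are transverse along $S$ and disjoint off $S$ — and that the classifying morphism to $W^r_{g,h}$ is generically finite onto each component. The vanishing $h^1(E,N_{E/\mathbb{P}^r}(-S))=0$ is what simultaneously guarantees smoothness of $R$ and the transversality in condition $(iii)$; provided it extends to the general tuple (the hypothesis $[C]\in W^{r-1}_{g(r-1),\mathrm{sm}}$ ought to propagate this property), equidimensionality of $W^r_{g,h}$ will follow automatically from the intrinsic nature of the count.
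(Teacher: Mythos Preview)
Your proposal is correct and follows essentially the same route as the paper: the same four-fold parameter count $(C,H,S,E)$, the same cited lemmas \cite[Lemma 3.1]{Be1} and \cite[Lemma 2.4]{BaBeF} for the vanishing $h^1(E,N_{E/\mathbb{P}^r}(-S))=0$ and the identification of the tangent space, and the same arithmetic yielding $\dim W^r_g-1$. One small slip: your $R$ should be defined as the component through $[E]$ of the sublocus of $\Hilb^r_{1,r+1}$ parameterizing elliptic normal curves \emph{through $S$}, not of the full Hilbert scheme, since otherwise $T_{[E]}R$ would be $H^0(E,N_{E/\mathbb{P}^r})$ rather than $H^0(E,N_{E/\mathbb{P}^r}(-S))$.
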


\subsection{Nodal curves in $\mathbb{P}^r$ with limit subcanonical points}\label{subsection LIMIT SUBCANONICAL}
Consider the integers $r\geq 2$ and $g=g(r):={r+2\choose 2}$.
As we pointed out in the previous section, the general point of the component $\mathcal{V}_{g}^r\subset \mathcal{S}_{g}^r$ is a pair $[C,L]$, where $[C]\in W^{r}_{g,\text{sm}}$ and $L\cong \mathcal{O}_C(1)$.
We are interested in proving the existence of curves $[C]\in W^{r}_{g,\text{sm}}$ which admit a point $p\in C$ satisfying $L\cong \mathcal{O}_C\left((g-1)p\right)$, so that $p$ is a subcanonical point of $C$ and $[C,p]\in \mathcal{G}^r_g$.
We note that $L\cong \mathcal{O}_C\left((g-1)p\right)$ if and only if there exists a hyperplane $M\subset \mathbb{P}^r$ such that the multiplicity of the intersection between $C$ and $M$ at $p$ is $\mult_p(C,M)=g-1$.
Then it is natural to extend in these terms the notion of subcanonical points to every curve in $W^{r}_{g}$.
Namely,
\begin{definition}
For any curve $[C]\in W^{r}_{g}$, we say that $p\in C$ is a \emph{limit subcanonical point} if there exists a hyperplane $M\subset \mathbb{P}^r$ such that $M$ does not contain any component of $C$, and it cuts out on $C$ a $0$-dimensional scheme of length $g-1$ supported at a $p$.
\end{definition}

We denote by
$$
\displaystyle Q^r_{g}:= \left\{ [C] \in W^r_{g} \, \left| \, \exists\, M\subset \mathbb{P}^r\text{ hyperplane, }\exists\, p \in M \text{ s.t. } \mathrm{mult}_{p}(C,M)=g-1 \right.\right\}
$$
the sublocus of curves admitting a limit subcanonical point, and we set $Q^{r}_{g,\mathrm{sm}}:=Q^r_{g}\cap W^{r}_{g,\mathrm{sm}}$ and $Q^{r}_{g,h}:=Q^r_{g}\cap W^{r}_{g,h}$.
In particular, $Q^r_{g}$ is naturally endowed with the structure of subvariety of $W^r_{g}$.
Analogously, fixing a hyperplane $M\subset \mathbb{P}^r$, we can define the sublocus of $Q^r_{g}(M)\subset Q^{r}_{g}$ of curves possessing a limit subcanonical point cut out by $M$, together with $\displaystyle Q^{r}_{g,\mathrm{sm}}(M):=Q^r_{g}(M)\cap W^{r}_{g,\mathrm{sm}}$ and $\displaystyle Q^r_{g,h}(M):=Q^r_{g}(M)\cap W^{r}_{g,h}$.
\begin{remark}\label{remark Q^r_g(M)}
We recall that any curve $[C]\in Q^r_{g}(M)$ has degree $g-1$, hence there exists a unique limit subcanonical point on $C$ which is cut out by $M$.
Thus, for any hyperplane $M\subset \mathbb{P}^r$, there is a surjective morphism $f\colon Q^r_{g}(M)\longrightarrow M$, whose fibres $Q^r_{g}(M,p):=f^{-1}(p)$ are described by the curves $[C]\in Q^r_{g}(M)$ such that $\mult_p\left(C,M\right)=g-1$.
In addition, for any $p,q\in M$, there are isomorphisms $Q^r_{g}(M,p)\cong Q^r_{g}(M,q)$ induced by projectivities $\tau\in\mathrm{PGL}(r+1)$ such that $\tau(M)=M$ and $\tau(p)=q$.
\end{remark}

We point out that for any hyperplane $M\subset \mathbb{P}^r$, there exists a natural modular map
\begin{eqnarray*}
\mu\colon Q^r_{g,\mathrm{sm}}(M) & \longrightarrow & \mathcal{G}^r_{g}\subset \mathcal{M}_{g,1} \\
\left[C\right] & \longmapsto & \left[C,p\right]
\end{eqnarray*}
where $p\in C$ is the subcanonical point cut out by $M$.
Then the following holds.

\begin{lemma}\label{lemma Q^r_g,sm(M)}
Let $M\subset \mathbb{P}^r$ be a hyperplane, and let $Z\subset Q^r_{g,\mathrm{sm}}(M)$ be an irreducible component, whose general point $[C]\in Z$ parameterizes a non-degenerate linearly normal curve $C\subset \mathbb{P}^r$ such that its normal bundle $N_{C/\mathbb{P}^r}$ satisfies $h^1(C,N_{C/\mathbb{P}^r})=0$.\\
Then $Z$ dominates under $\mu$ an irreducible component $\mathcal{Z}\subset \mathcal{G}^r_{g}$ of dimension $$\dim \mathcal{Z}=\dim Z -\left[(r+1)^2-1-r\right].$$
\begin{proof}
Let $[C]\in Z$ be a general point, and let $p\in C$ be the subcanonical point cut out by $M$, so that $\mu\left([C]\right)=[C,p]$.
Since $h^1(C,N_{C/\mathbb{P}^r})=0$, the point $[C]\in \Hilb^r_{g,g-1}$ is smooth, hence there exists a unique irreducible component of $\Hilb^r_{g,g-1}$ passing through $[C]$, that is $W^r_g$.
Moreover, being $C\subset \mathbb{P}^r$ a smooth linearly normal curve, we deduce that $\mathcal{O}_C(1)\cong\mathcal{O}_C\left((g-1)p\right)$ is a very ample line bundle, with $h^0\left(C,\mathcal{O}_C\left((g-1)p\right)\right)=r+1$.
In particular, if $C^{\prime}\subset \mathbb{P}^r$ is a curve such that $[C^{\prime},p^{\prime}]=[C,p]\in \mathcal{Z}$ for some $p^{\prime}\in C^{\prime}$, then $C^{\prime}$ is projectively equivalent to $C$, so that $[C^{\prime}]\in W^r_g$.\\
Let $\mathcal{Z}\subset \mathcal{G}^r_{g}$ be an irreducible component containing $\mu(Z)$.
Then there exists a non-empty open subset $U\subset\mathcal{Z}$ containing $[C,p]$ such that for any $[\Gamma,q]\in U$, we have that $\mathcal{O}_\Gamma\left((g-1)q\right)$ is a very ample line bundle on $\Gamma$, with $h^0\left(\Gamma,\mathcal{O}_\Gamma\left((g-1)q\right)\right)=r+1$.
In particular, for any $[\Gamma,q]\in U$, the line bundle $\mathcal{O}_\Gamma\left((g-1)q\right)$ gives a complete $\mathfrak{g}^r_{g-1}$ on $\Gamma$.
Then there exists a suitable base change $U^{\prime}\longrightarrow U$, a family of pointed curves $\left(\mathcal{C}\longrightarrow U^{\prime}, \rho\colon U^{\prime}\longrightarrow \mathcal{C}\right)$ and a line bundle $\mathcal{L}$ on $\mathcal{C}$, such that $[C_t,\rho(t)]\in U$ and $L_{|C_t}\cong \mathcal{O}_{C_t}\left((g-1)\rho(t)\right)$ for any $t\in U^{\prime}$.
Moreover, we can define a morphism $\phi\colon\mathcal{C}\longrightarrow \mathbb{P}^{r}$ such that $\phi_{|C_t}\colon C_t\longrightarrow \mathbb{P}^{r}$ is the  complete $\mathfrak{g}^r_{g-1}$ on $C_t$ defined by $L_{|C_t}\cong \mathcal{O}_{C_t}\left((g-1)\rho(t)\right)$.
Furthermore, we may assume that $[C_0,\rho(0)]=[C,p]$ for some $0\in U^{\prime}$, and $\left[\phi(C_0)\right]=[C]\in Z$.\\
Since $[C]\in \Hilb^r_{g,g-1}$ is a smooth point, we necessarily have that $\left[\phi(C_t)\right]\in W^r_g$.
In addition, $\mathcal{O}_{\phi(C_t)}(1)\cong \mathcal{O}_{C_t}\left((g-1)\rho(t)\right)$ for any $t\in U^{\prime}$.
Thus there exists a hyperplane $M_t\subset \mathbb{P}^r$ cutting out on $\phi(C_t)$ the divisor $(g-1)\rho(t)$.
As $Z\subset Q^r_{g,\mathrm{sm}}(M)$ is an irreducible component, and $\phi(C_t)$ specializes to $C=\phi(C_0)$ for general $t\in U^{\prime}$, we conclude that there exists a projective transformation $\tau\in \mathrm{PGL}(r+1)$ such that $\tau(M_t)=M$ and $\left[\tau\left(\phi(C_t)\right)\right]\in Z$.
In particular, $\mu\left(\left[\tau\left(\phi(C_t)\right)\right]\right)=\left[\tau\left(\phi(C_t)\right),\tau(\rho(t))\right]\in \mathcal{Z}$ is a general point, so that $\mu_{|Z}\colon Z\longrightarrow \mathcal{Z}$ is dominant.\\
Finally, since the number of subcanonical points on a smooth curve is finite, the fibre of $\mu$ at some $[C,p]\in \mathrm{Im} (\mu)$ has the same dimension of the space of projectivities in $\mathrm{PGL}(r+1)$ sending $M$ to itself, that is $(r+1)^2-1-r$.
%Va bene, ci potrebbero infatti essere più punti sottocanonici p_1,...,p_k su C tali che [C,p_i] \cong [C,p_j], e in particolare (g-1)p_i e (g-1)p_j darebbero la stessa theta.
Thus $\dim \mathcal{Z}=\dim \mu(Z)=\dim Z-\left[(r+1)^2-1-r\right]$, as claimed.
\end{proof}
\end{lemma}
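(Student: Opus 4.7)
The plan is to check that $\mu|_Z$ is dominant onto an irreducible component $\mathcal{Z}$ of $\mathcal{G}^r_g$ and then compute the general fibre dimension. For the component description, I observe that for a general $[C]\in Z$ the linear-normality and non-degeneracy hypotheses give $h^0(C,\mathcal{O}_C(1))=r+1$, and by definition of $Q^r_{g,\mathrm{sm}}(M)$ one has $\mathcal{O}_C(1)\cong \mathcal{O}_C((g-1)p)$ for the unique subcanonical point $p$ cut out by $M$. In particular $[C,p]=\mu([C])$ already satisfies the open conditions ``$\mathcal{O}_C((g-1)p)$ very ample'' and ``$h^0=r+1$'' inside $\mathcal{G}^r_g$, and I let $\mathcal{Z}$ be an irreducible component of $\mathcal{G}^r_g$ through $\mu([C])$.

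For dominance, the plan is to start with a general $[\Gamma,q]\in\mathcal{Z}$ near $[C,p]$ and reverse the construction: by openness the complete linear series $|(g-1)q|$ embeds $\Gamma$ as a non-degenerate linearly normal curve $\phi(\Gamma)\subset\mathbb{P}^r$ with some hyperplane $M_{\Gamma}$ cutting out $(g-1)\phi(q)$. I carry this out in families via a suitable base change $U'\to U\subset\mathcal{Z}$, using a relative line bundle $\mathcal{L}$ restricting to $\mathcal{O}_{C_t}((g-1)\rho(t))$, to obtain a morphism $\phi\colon\mathcal{C}\to\mathbb{P}^r$ whose central fibre is projectively equivalent to $C\subset\mathbb{P}^r$. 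The hypothesis $h^1(C,N_{C/\mathbb{P}^r})=0$ makes $[C]$ a smooth point of $\Hilb^r_{g,g-1}$, so $W^r_g$ is the \emph{unique} component through $[C]$ and nearby $[\phi(C_t)]$ lie in $W^r_g$. Applying projectivities $\tau_t\in\mathrm{PGL}(r+1)$ with $\tau_t(M_{\Gamma_t})=M$, the resulting curves lie in $Q^r_{g,\mathrm{sm}}(M)$; since $Z$ is the component of $Q^r_{g,\mathrm{sm}}(M)$ through $[C]$, they belong to $Z$, which shows $\mu(Z)$ is dense in $\mathcal{Z}$.

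The fibre of $\mu$ over a general $[C,p]\in\mathcal{Z}$ parameterizes curves $C'\subset\mathbb{P}^r$ in $Q^r_{g,\mathrm{sm}}(M)$ for which $(C',p')\cong(C,p)$ as pointed curves; such an isomorphism necessarily pulls back $\mathcal{O}_{C'}(1)$ to $\mathcal{O}_C(1)$ and so is induced by a projectivity $\tau\in\mathrm{PGL}(r+1)$ with $\tau(C)=C'$ and, since $C'\in Q^r_{g,\mathrm{sm}}(M)$, with $\tau(M)=M$. Hence the fibre is (up to finite automorphisms of $(C,p)$) the orbit of $[C]$ under the stabilizer of $M$ in $\mathrm{PGL}(r+1)$, whose dimension is $(r+1)^2-1-r$ since the orbit of $M$ in $(\mathbb{P}^r)^\vee$ is $r$-dimensional. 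Combining this with dominance yields $\dim\mathcal{Z}=\dim Z-\bigl((r+1)^2-1-r\bigr)$. The main delicate point will be ensuring that the embedded curves produced from a deformation of $(\Gamma,q)$ inside $\mathcal{Z}$ genuinely lie in the component $W^r_g$ rather than some other component of $\Hilb^r_{g,g-1}$; this is precisely where $h^1(N_{C/\mathbb{P}^r})=0$ is indispensable.
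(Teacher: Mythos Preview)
Your proposal is correct and follows essentially the same approach as the paper: you establish dominance of $\mu|_Z$ onto a component $\mathcal{Z}\subset\mathcal{G}^r_g$ by embedding a family over $\mathcal{Z}$ via the complete $\mathfrak{g}^r_{g-1}$, use $h^1(N_{C/\mathbb{P}^r})=0$ to force nearby images into $W^r_g$, then move the osculating hyperplane to $M$ by a projectivity; and you compute the fibre dimension via the stabilizer of $M$ in $\mathrm{PGL}(r+1)$. The only cosmetic difference is that you phrase the fibre computation as an orbit under $\mathrm{Stab}(M)$, whereas the paper states the dimension directly, but the content is identical.
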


\begin{remark}\label{remark Q^r_g,h(M)}
Let $[X]\in Q^r_{g,h}\subset W^r_{g,h}$ be a general point.
Hence $X=C\cup E\subset \mathbb{P}^r$ as in Section \ref{subsection COMPONENTS OF M^r_g}, and there exists a hyperplane $M\subset \mathbb{P}^r$ and a point $p\in M$ such that $\mult_p(X,M)=g(r)-1$.
Since $\deg C=g(r-1)-1$ and $\deg E=g(r)-g(r-1)=r+1$, we have that $p\in C\cap E$, $\mult_p(C,M)=g(r-1)-1$ and $\mult_p(E,M)=r+1$.
In particular, the curve $C\subset H\cong \mathbb{P}^{r-1}$ is such that $[C]\in Q^{r-1}_{g(r-1),\mathrm{sm}}$, and the subcanonical point $p\in C$ is cut out by $H\cap M$.
Besides, the elliptic normal curve $E\subset \mathbb{P}^{r}$ has an inflection point of order $r+1$ at $p$, whose osculating plane is $M$.
\end{remark}

Along the lines of the latter remark, we present a criterion for constructing inductively components of $Q^{r+1}_{g(r+1),h}$ by means of irreducible components of $Q^{r}_{g(r),\mathrm{sm}}$.
\begin{lemma}\label{lemma DIMENSION LIMIT SUBCANONICAL}
Let $r\geq 2$ and $g(r)={r+2\choose 2}$.
Consider a hyperplane $M \subset \mathbb{P}^r$, and let $Z\subset Q^{r}_{g(r),\mathrm{sm}}(M)$ be an irreducible component of dimension $2g(r)-2-\frac{r(r-1)}{2}+(r+1)^2-r$, whose general point $[C]\in Z$ parameterizes a non-degenerate linearly normal curve $C \subset \mathbb{P}^r$ such that $h^1(C,N_{C/\mathbb{P}^r})=0$.\\
Then, for any hyperplane $M^{\prime} \subset \mathbb{P}^{r+1}$, there exists an irreducible component $B \subset Q^{r+1}_{g(r+1),h}(M^{\prime})$ of dimension $2g(r+1)-4-\frac{r(r+1)}{2}+(r+2)^2-r$.
Moreover, the general point $[X] \in B$ parameterizes a non-degenerate linearly normal curve $X\subset \mathbb{P}^{r+1}$ such that $h^1(X,N_{X/\mathbb{P}^{r+1}})=0$.
\begin{proof}
Let $H,M^{\prime} \subset \mathbb{P}^{r+1}$ be distinct hyperplanes and let $p^{\prime} \in H\cap M^{\prime}$.
Let $[C]\in Z$ be as above, and let $p \in C$ be the point such that $\mult_p(C,M)=g(r)-1$.
Let us consider an isomorphism $\iota_{H,p^{\prime}}\colon \mathbb{P}^r\longrightarrow H$ such that $\iota_{H,p^{\prime}}(p)=p^{\prime}$ and the $(r-1)$-plane $M\subset \mathbb{P}^r$ maps to $H\cap M^{\prime}$.
So we identify $p$, $C$ and $M$ with their images in $H\subset \mathbb{P}^{r+1}$.\\
Let $Z_p:=Z \cap Q^r_{g}(M,p)$ be the sublocus of $Z$ described by curves having a subcanonical point at $p\in M$, and notice that it is an irreducible component of $Q^r_{g}(M,p)\cap W^r_{g,\text{sm}}$ having dimension $\dim Z_p=\dim Z-(r-1)$ (cf. Remark \ref{remark Q^r_g(M)}).
Let $C^{(r+2)}$ be the $(r+2)$-fold symmetric product of $C$, and consider the codimension one subvariety $V_{p}:=\left\{\left.p+E\in C^{(r+2)}\right|E\in C^{(r+1)} \right\}$ of effective divisors of degree $r+2$ on $C\subset \mathbb{P}^r$ containing $p$.
We define $R_{V_{p}}\subset \Hilb^{r+1}_{1,r+2}$ as the subscheme parameterizing smooth elliptic normal curves $E$ of degree $r+2$, for which there exists a smooth divisor $S_{p}\in V_{p}$ such that $\Supp(S_{p})\subset E$ and $\mult_{p}(E,M^{\prime})=r+2$. From the fact that $E$ is smooth, it is easy to deduce that the points of $\Supp(S_{p})$ are in linearly general position in $\mathbb{P}^r$.
%L'ipotesi S_p smooth è necessaria perché Supp non vede la molteplicità, per cui per esempio (r+2)p ha come supporto p.
%Per h=r+2 la condizione di linearly general position in P^r implica il fatto che lo span dei punti di S_p sia tutto P^r.
\begin{claim}\label{claim R_(V_h,p)}
Any irreducible component $R$ of $R_{V_{p}}\subset \Hilb^{r+1}_{1,r+2}$ has dimension $2r+4$.
\begin{proof}[Proof of Claim \ref{claim R_(V_h,p)}]
Fix a general divisor $S_{p}\in V_{p}$, and let $R^{\prime} \subset \Hilb^{r+1}_{1,r+2}$ be the subscheme parameterizing smooth elliptic normal curves $E$ of degree $r+2$, with $\mult_{p}(E,M^{\prime})=r+2$.
We point out that any such a curve may be constructed by taking a smooth elliptic curve $E$ and a point $q\in E$, so that the complete linear system $\displaystyle \left|\mathcal{O}_E((r+2)q)\right|$ defines an embedding $\varphi_{|\mathcal{O}_E((r+2)q)|}\colon E\hookrightarrow \mathbb{P}^{r+1}$, which may be chosen such that $\varphi_{|\mathcal{O}_E((r+2)q)|}(q)=p$ and $\mult_{p}(E,M^{\prime})=r+2$.
Hence $R^{\prime}$ is irreducible, because there is a fibration $R^{\prime}\longrightarrow \mathcal{M}_{1,1}$ over the moduli space $\mathcal{M}_{1,1}$ of pointed elliptic curves given by $\left[E\right]\mapsto [E, p]$, and the fibre is isomorphic to the space $P$ of projectivities of $\mathbb{P}^{r+1}$ fixing $p$ and sending $M^{\prime}$ in itself.
In particular, the dimension of $P$ is $\dim P=\dim \mathrm{PGL}(r+2)-(2r+1)=r^2+2r+2$: it suffices to choose coordinates $x_0,\ldots,x_{r+1}$ on $\mathbb{P}^{r+1}$, set $p=[0: \ldots :0:1]$ and $M^{\prime}=\left\{x_0=0\right\}$, and notice that the space at hand is represented by matrices $\left[a_{i,j}\right]_{0\leq i,j\leq r+1}$ such that $a_{1,l}=0$ for $1\leq l\leq r+1$, and $a_{m,r+1}=0$ for $0\leq m \leq r$.
It follows that $\dim R^{\prime}=\dim \mathcal{M}_{1,1}+ \dim P=r^2+2r+3$.\\
Let $T \subset \mathbb{P}^{r+1}$ be a general hyperplane through $p$ and let $E \cap T=\left\{p,q_1,...,q_{r+1}\right\}$, so that $\left(E \cap T\right)\smallsetminus p$ consists of points of $\mathbb{P}^{r+1}$ in linearly general position.
We note that the set $\Supp(S_{p}) \smallsetminus p\subset \mathbb{P}^{r+1}$ consists of points in linearly general position, as well.
As $\dim P>(r+1)(r+1)$, there exists some projective transformation in $P$ sending $\left(E \cap T\right)\smallsetminus p$ to $\Supp(S_{p}) \smallsetminus p$. Thus there exists a smooth curve $E$ parameterized over $R^{\prime}$ and passing through $\Supp(S_{p})$.
In particular, $E$ intersects the curve $C$ transversally at $S_{p}$.
%Se E non intersecasse C trasversalmente, intersecherebbe H con molteplicità totale > r+2, assurdo.
Moreover, since the points of $\Supp(S_{p}) \smallsetminus p\subset \mathbb{P}^{r+1}$ are projectively equivalent to $r+1$ general points of $\mathbb{P}^{r+1}$, they impose exactly $r(r+1)=r^2+r$ conditions to curves of $R^{\prime}$.
Therefore, if $R_{S_{p}}\subset R^{\prime}$ is the subscheme of curves passing through $\Supp(S_{p})$, and $R^{\prime\prime}\subset R_{S_{p}}$ is an irreducible component containing $\left[E\right]$, we deduce $\dim R^{\prime\prime}=\dim R^{\prime}-\left(r^2+r\right)=r+3$.\\
Finally, we let $S_{p}$ vary in $V_{p}$, and we consider an irreducible component $R$ of $R_{V_{p}}$ containing $R^{\prime\prime}$.
Since $R$ is fibred over $V_{p}$, we conclude that $\dim R=\dim R^{\prime\prime}+\dim V_{p}=2r+4$.
\end{proof}
\end{claim}

We recall that for any $[E]\in R_{V_{p}}$, the elliptic curve $E$ meets $C$ transversally at a general divisor $S_{p}\in V_{p}$, so that the reducible curve $X:=C\cup E$ has arithmetic genus $g(r)+r+2=g(r+1)$.
Furthermore, under the identification of $p\in C$ and $p^{\prime}\in M^{\prime}$, we have $\mult_{p^{\prime}}(X,M^{\prime})=\mult_{p^{\prime}}(C,M^{\prime})+\mult_{p^{\prime}}(E,M^{\prime})=g+r+1=g(r+1)-1$, and hence $\left[X\right]\in Q^{r+1}_{g(r+1),h}(M^{\prime})$.
We shall construct the irreducible component $B\subset Q^{r+1}_{g(r+1),h}(M^{\prime})$, by letting vary $[C]\in Z_p$, the hyperplane $H\subset \mathbb{P}^{r+1}$, $[E]\in R_{V_{p^{\prime}}}$ and $p^{\prime}\in M^{\prime}$.

Let $Z_p^\circ\subset Z_p$ be the open subset parameterizing non-degenerate curves.
Consider the incidence variety $\mathcal{I}:=\left\{\left.\left(p^{\prime},[H]\right)\in M^{\prime}\times \left(\left(\mathbb{P}^{r+1}\right)^*\smallsetminus \left[M^{\prime}\right]\right)\right|p^{\prime}\in H\cap M^{\prime}\right\}$, and let $\iota\colon \mathbb{P}^r\times \mathcal{I}\longrightarrow \mathbb{P}^{r+1}$ be a morphism such that for any pair $\left(p^{\prime},[H]\right)\in \mathcal{I}$, the map $\iota\left(\,\cdot\,,\left(p^{\prime},[H]\right)\right)\colon \mathbb{P}^r\hookrightarrow H$ is an inclusion, with $\iota\left(p,\left(p^{\prime},[H]\right)\right)=p^{\prime}$ and $\iota\left(M,\left(p^{\prime},[H]\right)\right)=H\cap M^{\prime}$.
We point out that $\iota$ induces an inclusion $Z_p^\circ\times \mathcal{I} \hookrightarrow \Hilb^{r+1}_{g,g-1}$, which sends a pair $\left([\Gamma],\left(p^{\prime},[H]\right)\right)$ to the point $\left[\iota\left(\Gamma,\left(p^{\prime},[H]\right)\right)\right]$ parameterizing the curve $\Gamma$ embedded into $H$.
We denote by $Z^{\prime}\subset \Hilb^{r+1}_{g,g-1}$ the image of $Z_p^\circ\times \mathcal{I}$, and let $\alpha_2\colon Z^{\prime}\longrightarrow \mathcal{I}$ be the second projection.
In particular, $Z^{\prime}$ is an irreducible subvariety of dimension $\dim Z_p+\dim \mathcal{I}=\left(\dim Z-(r-1)\right)+2r=\dim Z+r+1$.

We consider the universal family $\mathcal{C}\subset \mathbb{P}^{r+1} \times Z^{\prime} \xrightarrow{\psi} Z^{\prime}$, the relative hyperplane bundle $\mathcal{L}:=\mathcal{O}_{\mathcal{C}}(1)$, and the $(r+2)$-fold relative symmetric product $\mathcal{C}^{(r+2)} \xrightarrow{\Psi} Z^{\prime}$ of $\psi$.
Moreover, we define a divisor $\mathcal{D}\subset \mathcal{C}^{(r+2)}$ as $\mathcal{D}:=\left\{\left.p^{\prime\prime}+D \in \Gamma^{(r+2)} \right| \, [\Gamma]\in Z^{\prime}\textrm{, }\alpha_2[\Gamma]=\left(p^{\prime\prime},[H]\right) \textrm{ and } D\in \Gamma^{(r+1)}\right\}$, whose restriction to any fibre $\Gamma$ of $\psi$ parameterizes the effective divisors on $\Gamma$ having degree $r+2$ and containing the corresponding $p^{\prime\prime}$. Clearly one has $\dim \mathcal{D}=r+1+\dim Z^{\prime}$.
Let $\gamma_2:\mathbb{P}^{r+1} \times \mathcal{D} \rightarrow \mathcal{D}$ be the projection on the second factor, and let $\Hilb_{1,r+2}(\gamma_2)$ be the relative Hilbert scheme of $\gamma_2$ parameterizing curves of arithmetic genus 1 and degree $r+2$ contained in the fibres of $\gamma_2$.
Let $\mathcal{E} \subset \Hilb_{1,r+2}(\gamma_2)$ denote the subscheme described by non-degenerate smooth curves, and let $\varepsilon\colon \mathcal{E}\longrightarrow  \mathcal{D}$ be the natural map inherited from $\Hilb_{1,r+2}(\gamma_2)$.
Then we consider the subvariety of $\mathcal{E}$ given by
$$
\displaystyle \mathcal{Q}:=\left\{\left.[E]\in \mathcal{E}\right| \mult_{p^{\prime\prime}}(E,M^{\prime})=r+2 \textrm{, and }\langle p^{\prime\prime}+D\rangle \cap E=p^{\prime\prime}+D \textrm{ for } \varepsilon\left([E]\right)=p^{\prime\prime}+D \right\},
$$
where $\langle \cdot\rangle$ denotes the linear span in $\mathbb{P}^r$.
Since $V_{p^{\prime}}\subset \mathcal{D}$ by construction, for any irreducible component $R\subset R_{V_{p^{\prime}}}$ as in Claim \ref{claim R_(V_h,p)}, we have that any curve $[E]\in R$ lies on $\mathcal{Q}$.
Conversely, if $[E]\in \mathcal{Q}$ is such that $\varepsilon\left([E]\right)=S_{p^{\prime}}$ with $S_{p^{\prime}}\in V_{p^{\prime}}$ general, then $[E]\in R_{V_{p^{\prime}}}$.
So we fix an irreducible component $R\subset R_{V_{p^{\prime}}}$, and we consider an irreducible component $\mathcal{B}\subset \mathcal{Q}$ containing $R$.
Therefore $\mathcal{B}$ is fibred over the irreducible scheme $Z^{\prime}$ via the composition map
%ricordati che C è già generale, per cui se la fibra sopra C è non vuota, allora \mathcal{B} domina Z'.
$$
\xi\colon\mathcal{B} \xrightarrow{\varepsilon} \mathcal{D} \xrightarrow{\Psi} Z^{\prime}.
$$
The fibre over a general point $[C^{\prime}] \in Z^{\prime}$, with $\alpha_2[C^{\prime}]=\left(p^{\prime},[H]\right)$, is an irreducible component of the scheme $R_{V_{p^{\prime}}}$, which has dimension $2r+4$ by Claim \ref{claim R_(V_h,p)}.
Thus \begin{align*}
\dim \mathcal{B} & = \dim Z^{\prime} + \dim R_{V_{p^{\prime}}} = \dim Z + r + 1 + 2r+4=\\
 & =\left[ 2g(r)-2-\frac{r(r-1)}{2}+(r+1)^2-r\right]+r+1+ 2r+4=\\
 &  = 2g(r+1)-4-\frac{r(r+1)}{2}+(r+2)^2-r\,.
\end{align*}
Finally, we point out that $\mathcal{B}$ is isomorphic to the scheme
$$
B^\circ:=\left\{\left.[C \cup E] \in W^{r+1}_{g(r+1),h} \right| [C] \in Z^{\prime} \textrm{ and } [E] \in \xi^{-1}([C])\right\}.
$$
We recall that $Z$ is an irreducible component of $Q^{r}_{g,\mathrm{sm}}(M)$, and $\mathcal{D}$ is irreducible.
Then it is straightforward to check that the Zariski closure of $B^\circ$ must describe a whole irreducible component $B$ of $Q^{r+1}_{g(r+1),h}(M^{\prime})$.
%B è irriducibile, quindi se per assurdo non fosse una componente irriducibile dovrebbe esservi strettamente contenuto. Ma allora o Z dovrebbe essere strettamente contenuto in Q o \mathcal{V}_{h,p} dovrebbe essere strettamente contenuto in \mathcal{V}^{h-1}_{h,p}(\mathcal{L}), assurdo.
\smallskip

For the last part of the statement, let $\left[X\right]\in B$ be a general point, with $X:=C \cup E$, $\alpha_2[C]=\left(p^{\prime},[H]\right)$, and $\varepsilon\left([E]\right)=S_{p^{\prime}}$.
In order to show that $X$ is linearly normal, it suffices to compute the cohomology of the Mayer-Vietoris sequence
$$
0 \rightarrow \mathcal{I}_{X/\mathbb{P}^{r+1}}(1) \rightarrow \mathcal{I}_{C/\mathbb{P}^{r+1}}(1) \oplus \mathcal{I}_{E/\mathbb{P}^{r+1}}(1) \rightarrow \mathcal{I}_{S_{p^{\prime}}/\mathbb{P}^{r+1}}(1) \rightarrow 0.
$$
Since $C \subset H$ is linearly normal by assumption, one has $h^1(\mathcal{I}_{C/H}(1))=0$.
Using the exact sequence $0 \rightarrow \mathcal{I}_{H/\mathbb{P}^{r+1}}(1) \cong \mathcal{O}_{\mathbb{P}^{r+1}} \rightarrow \mathcal{I}_{C/\mathbb{P}^{r+1}}(1) \rightarrow \mathcal{I}_{C/H}(1) \rightarrow 0$, we immediately deduce $h^1(\mathcal{I}_{C/\mathbb{P}^{r+1}}(1))=0$.
On the other hand, $E \subset \mathbb{P}^{r+1}$ is linearly normal, hence $h^1(\mathcal{I}_{E/\mathbb{P}^{r+1}}(1))=0$.
Since $\langle C\rangle =H \cong \mathbb{P}^r$, $\langle E\rangle= \mathbb{P}^{r+1}$ and $\langle S_{p^{\prime}}\rangle=H$, we have $h^0(\mathcal{I}_{C/\mathbb{P}^{r+1}}(1))=1$, $h^0(\mathcal{I}_{E/\mathbb{P}^{r+1}}(1))=0$ and $h^0(\mathcal{I}_{S_{p^{\prime}}/\mathbb{P}^{r+1}}(1))=1$.
Furthermore, $X \subset \mathbb{P}^{r+1}$ is non-degenerate, and hence $h^0(\mathcal{I}_{X/\mathbb{P}^{r+1}}(1))=0$.
Therefore $h^1(\mathcal{I}_{X/\mathbb{P}^{r+1}}(1))=0$, that is $X$ is linearly normal.

To conclude, assume that $h^1(C,N_{C/\mathbb{P}^r})=0$. Since $S_{p^{\prime}} \in V_{p^{\prime}}$ is general, the $r+1$ points of $\Supp(S_{p^{\prime}}) \smallsetminus p^{\prime}$ are general as points of $C$. Thus, it follows immediately from the geometric version of Riemann-Roch theorem that $h^1(C,\mathcal{O}_C(\Sigma + S_{p^{\prime}}))=0$, where $\Sigma$ is a hyperplane section of $C$. Then the proof of the fact that $h^1(X,N_{X/\mathbb{P}^{r+1}})=0$ goes on exactly as in the proof of \cite[Lemma 3.2]{Be1}.
\end{proof}
\end{lemma}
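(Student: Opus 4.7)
The natural strategy is to construct an irreducible family of reducible nodal curves $X=C\cup E\subset \mathbb{P}^{r+1}$ of the type parameterized by $W^{r+1}_{g(r+1),h}$, where $C$ comes from $Z$ with its subcanonical point rigidified at a chosen $p'$, and $E$ is an elliptic normal curve of degree $r+2$ meeting $C$ at a divisor through $p'$ and hyperosculating $M'$ at $p'$. Concretely, fix a hyperplane $H\subset\mathbb{P}^{r+1}$ distinct from $M'$ together with a point $p'\in H\cap M'$, and use an isomorphism $\iota\colon\mathbb{P}^{r}\to H$ sending $p\in C$ to $p'$ and $M$ to $H\cap M'$ to transplant $C$ inside $H$. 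Then $X=C\cup E$ has arithmetic genus $g(r)+1+(r+2)-1=g(r+1)$, degree $(g(r)-1)+(r+2)=g(r+1)-1$, and $\mult_{p'}(X,M')=\mult_{p'}(C,M')+\mult_{p'}(E,M')=(g(r)-1)+(r+2)=g(r+1)-1$, so $[X]\in Q^{r+1}_{g(r+1),h}(M')$.

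The dimension count has three ingredients. First, the sublocus $Z_p\subset Z$ of curves with subcanonical point exactly at $p$ has dimension $\dim Z-(r-1)$ by Remark \ref{remark Q^r_g(M)}. Second, the incidence variety $\mathcal{I}$ of pairs $(p',[H])$ with $p'\in H\cap M'$ and $H\neq M'$ is fibred over the dual projective space with $(r-1)$-dimensional fibres, so $\dim\mathcal{I}=2r$. Third, the elliptic curves $E$ attached at a variable divisor through $p'$ contribute $2r+4$. Granting the last count, the total dimension is $(\dim Z-(r-1))+2r+(2r+4)=\dim Z+3r+5$, and a direct substitution shows this coincides with $2g(r+1)-4-\tfrac{r(r+1)}{2}+(r+2)^2-r$.

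The main technical step is the dimension computation for the elliptic curves, which I would phrase as an intermediate claim. Let $R'\subset \Hilb^{r+1}_{1,r+2}$ parameterize smooth elliptic normal curves $E$ of degree $r+2$ in $\mathbb{P}^{r+1}$ with $\mult_{p'}(E,M')=r+2$. Every such curve arises by composing the complete embedding $\varphi_{|\mathcal{O}_E((r+2)q)|}$ with a projectivity carrying its canonical flag at $q$ to the flag $(p',M')$; this yields a fibration $R'\to\mathcal{M}_{1,1}$ whose fibre is the stabilizer $P\subset\mathrm{PGL}(r+2)$ of $(p',M')$, of dimension $(r+2)^{2}-1-(2r+1)=r^{2}+2r+2$, so $\dim R'=r^{2}+2r+3$. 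For a general $S_{p'}=p'+D\in V_{p'}$ the $r+1$ points of $\Supp(D)$ lie in linearly general position in $\mathbb{P}^{r+1}$; since $\dim P>(r+1)^{2}$, the $P$-orbit meets the locus of such configurations, producing curves in $R'$ through $S_{p'}$. Imposing these $r+1$ general base points then cuts $R'$ by the expected $r(r+1)$ conditions, so the locus $R_{S_{p'}}$ has dimension $r+3$, and letting $S_{p'}$ vary in the $(r+1)$-dimensional family $V_{p'}$ gives $\dim R_{V_{p'}}=2r+4$.

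It remains to package the construction as a relative family, take its image $B^\circ$ in $\Hilb^{r+1}_{g(r+1),g(r+1)-1}$, and verify that its Zariski closure $B$ is an entire irreducible component of $Q^{r+1}_{g(r+1),h}(M')$, which follows from the irreducibility of each factor in the parameter count together with the description of $W^{r+1}_{g(r+1),h}$. Linear normality of the general $X\in B$ follows from the Mayer--Vietoris sequence $0\to \mathcal{I}_{X/\mathbb{P}^{r+1}}(1)\to \mathcal{I}_{C/\mathbb{P}^{r+1}}(1)\oplus \mathcal{I}_{E/\mathbb{P}^{r+1}}(1)\to \mathcal{I}_{C\cap E/\mathbb{P}^{r+1}}(1)\to 0$, using that $C\subset H$ and $E\subset\mathbb{P}^{r+1}$ are both linearly normal and that $\langle C\cap E\rangle=H$; the vanishing $h^{1}(X,N_{X/\mathbb{P}^{r+1}})=0$ will then follow by the cohomological argument of \cite[Lemma 3.2]{Be1}, which requires $h^{1}(C,\mathcal{O}_{C}(\Sigma+S_{p'}))=0$ (a consequence of the geometric Riemann--Roch, since $\Supp(S_{p'})\smallsetminus\{p'\}$ is a general $(r+1)$-tuple on $C$). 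I expect the main obstacle to be the dimension count for $R_{V_{p'}}$---specifically, verifying that the large stabilizer $P$ acts with sufficient freedom on $(r+1)$-tuples of linearly general points to realize arbitrary prescribed configurations; the remaining Mayer--Vietoris and normal bundle computations should then proceed routinely.
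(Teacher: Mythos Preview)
Your proposal is correct and follows essentially the same approach as the paper: the same transplantation of $C$ into a hyperplane $H$ via $\iota$, the same three-factor dimension count $(\dim Z-(r-1))+2r+(2r+4)$, the identical claim that $\dim R_{V_{p'}}=2r+4$ proved via the fibration $R'\to\mathcal{M}_{1,1}$ with fibre the stabilizer $P$ of dimension $r^2+2r+2$, and the same Mayer--Vietoris and reference to \cite[Lemma 3.2]{Be1} for linear normality and $h^1(N_{X/\mathbb{P}^{r+1}})=0$. The only difference is expository: the paper spells out the relative Hilbert-scheme packaging (the schemes $Z'$, $\mathcal{D}$, $\mathcal{Q}$, $\mathcal{B}$) in more detail, but your outline of that step is accurate.
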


\subsection{Components of $\mathcal{G}_{g(r)}^r$ having expected dimension}\label{subsection SMOOTHING LIMIT SUBCANONICAL}

We are going to prove that any locus $\mathcal{G}^r_{g(r)}\subset \mathcal{M}_{g(r),1}$ of subcanonical points admits an irreducible component $\mathcal{Z}_{g(r)}$ of expected dimension.
In analogy with \cite{Be2}, we shall construct a component $Z_{g(r)}\subset Q^r_{g(r),\mathrm{sm}}$ of suitable dimension in the Hilbert scheme of curves in $\mathbb{P}^r$, and we shall map it in the moduli space $\mathcal{M}_{g(r),1}$ to obtain the component $\mathcal{Z}_{g(r)}\subset \mathcal{G}^r_{g(r)}$.

\begin{theorem}\label{theorem SMOOTHING r}
For any $r\geq 2$ and $g(r)={r+2 \choose 2}$, there exists an irreducible component $\mathcal{Z}_{g(r)}$ of $\mathcal{G}^r_{g(r)}$ having expected dimension $2g(r)-1-\frac{r(r-1)}{2}$, and such that its general point $[C,p]\in\mathcal{Z}_{g(r)}$ satisfies $h^0\left(C,O_C\left(\left(g(r)-1\right)p\right)\right)=r+1$.

\begin{proof}
We want to prove that for any $r\geq 2$ and for any hyperplane $M\subset \mathbb{P}^{r}$, there exists an irreducible component $Z_{g(r)}\subset Q^{r}_{g(r),\mathrm{sm}}(M)$ such that $\dim Z_{g(r)}=2g(r)-2-\frac{r(r-1)}{2}+(r+1)^2-r$, and the general point $[C]\in Z_{g(r)}$ parameterizes a non-degenerate linearly normal curve $C \subset \mathbb{P}^r$ with $h^1(C,N_{C/\mathbb{P}^r})=0$.

Indeed, if such a component exists, Lemma \ref{lemma Q^r_g,sm(M)} assures that the image of $Z_{g(r)}$ under the modular map $\mu\colon Q^{r}_{g(r),\mathrm{sm}}(M)\longrightarrow \mathcal{M}_{g(r),1}$ is an irreducible component $\mathcal{Z}_{g(r)}\subset \mathcal{G}^r_{g(r)}$ having dimension $\dim \mathcal{Z}_{g(r)} = \dim Z_{g(r)}-\left[(r+1)^2-1-r\right]=2g(r)-1-\frac{r(r-1)}{2}$.
Moreover, as the general point $[C]\in Z_{g(r)}$ parameterizes a non-degenerate linearly normal curve $C\subset \mathbb{P}^r$ admitting a divisor $\left(g(r)-1\right)p$ cut out by a hyperplane $M\subset \mathbb{P}^r$, we deduce that $h^0\left(C, \mathcal{O}_C\left(\left(g(r)-1\right)p\right)\right)=h^0\left(C, \mathcal{O}_C\left(1\right)\right)=r+1$, so that the assertion follows.

We point out that when $r=2$, such a component $Z_6=Z_{g(2)}$ does exist.
Given any line $M\subset \mathbb{P}^{2}$, we define $Z_6\subset \Hilb^{2}_{6,5}$ as the locus of smooth quintic curves $C\subset \mathbb{P}^2$ such that $M$ meets $C$ at a single point $p\in C$, as in Example \ref{example G_6^2}.
We recall that pairs $[C,p]\in \mathcal{M}_{6,1}$ as above describe an irreducible component $\mathcal{Z}_6\subset \mathcal{G}_6^2$ having dimension $2g(r)-1-\frac{r(r-1)}{2}=10$.
Therefore, it is easy to see that $\dim Z_{6}=\dim \mathcal{Z}_{6}+(r+1)^2-1-r=16$ (cf. Lemma \ref{lemma Q^r_g,sm(M)}).
We note further that the general curve $[C]\in Z_6$ parameterizes a linearly normal curve, as $\mathcal{O}_C(5p)\cong \mathcal{O}_C(1)$ possesses exactly 3 independent global sections (see Example \ref{example G_6^2}), and that $h^1(C,N_{C/\mathbb{P}^2})=0$ as $N_{C/\mathbb{P}^2} \cong \mathcal{O}_C(C)$.

Then we argue by induction on $r$, and we want to prove that for any hyperplane $M^{\prime}\subset \mathbb{P}^{r+1}$, there exists an irreducible component $Z_{g(r+1)}\subset Q^{r+1}_{g(r+1),\mathrm{sm}}(M^{\prime})$ such that $\dim Z_{g(r+1)}=2g(r+1)-2-\frac{r(r+1)}{2}+(r+2)^2-r-1$, and the general point $[\Gamma]\in Z_{g(r+1)}$ parameterizes a non-degenerate linearly normal curve $\Gamma \subset \mathbb{P}^{r+1}$ with $h^1(\Gamma,N_{\Gamma/\mathbb{P}^{r+1}})=0$.
For the sake of simplicity, we write hereafter $g:=g(r+1)$.

Thanks to the inductive assumption, Lemma \ref{lemma DIMENSION LIMIT SUBCANONICAL} assures that for any hyperplane $M^{\prime}\subset \mathbb{P}^{r+1}$, there exists an irreducible component of $B\subset Q^{r+1}_{g,h}(M^{\prime})$ having dimension $2g-2-\frac{r(r+1)}{2}+(r+2)^2-r-2$.
We focus on the partial compactification of $W^{r+1}_{g,\mathrm{sm}}$ given by $W^{r+1}_{g,\mathrm{sm}}\cup W^{r+1}_{g,h}$.
Then we consider a nodal curve $X_0=C\cup E$ parameterized by a general point of $0=[X_0]\in B$, endowed with a limit subcanonical point $p^{\prime}\in X$ cut out by $M^{\prime}$, i.e. $\mult_{p^{\prime}}(X,M^{\prime})=g-1$.
We are firstly aimed at bounding the dimension of any irreducible component of $Q^{r+1}_{g}(M^{\prime})$ passing through the point $0\in W^{r+1}_{g,\mathrm{sm}}\cup W^{r+1}_{g,h}$.

Let $\mathcal{X}\subset \mathbb{P}^{r+1}\times \left(W^{r+1}_{g,\mathrm{sm}}\cup W^{r+1}_{g,h}\right) \stackrel{\psi}{\longrightarrow} \left(W^{r+1}_{g,\mathrm{sm}}\cup W^{r+1}_{g,h}\right)\subset \Hilb^{r+1}_{g,g-1}$ be the universal family, and let $T\subset W^{r+1}_{g,\mathrm{sm}}\cup W^{r+1}_{g,h}$ be an analytic neighborhood centered at $0$, so that $\dim T=\dim W^{r+1}_{g}=3g-4+{r+3 \choose 2}$ by Proposition \ref{proposition W^r_g,sm}.
Let us still denote by $\mathcal{X}\subset \mathbb{P}^{r+1}\times T \stackrel{\psi}{\longrightarrow} T$ the restriction of the universal family, and let $X_t:=\psi^{-1}(t)$ be the fibre over $t\in T$.
Up to shrinking $T$, we may assume that the hyperplane $M^{\prime}$ has $0$-dimensional intersection with any curve parameterized by $T$, i.e. $M^{\prime}$ does not contain any irreducible component of the fibres $X_t$.
Following the argument for proving Theorem \ref{theorem BOUND}, we consider the $(g-1)$-fold relative symmetric product $\mathcal{X}^{(g-1)}\stackrel{\Psi}{\longrightarrow}  T$ of the family $\psi$, so that the fibre over each $t\in T$ is the variety $\Psi^{-1}(t)=X_t^{(g-1)}$.
In this case $\mathcal{X}^{(g-1)}$ does not coincide with the relative Hilbert scheme $\mathcal{X}^{[g-1]}\stackrel{\widetilde{\Psi}}{\longrightarrow} T$, which parameterizes $0$-dimensional subschemes of $\mathcal{X}$ of length $g-1$ contained in the fibres of $\psi$.
In particular, it follows from \cite[Section 2]{Ra3} that $\mathcal{X}^{[g-1]}$ is a smooth variety of dimension $\dim T + g-1$ fitting in the commutative diagram
\begin{equation*}
\xymatrix{ \mathcal{X}^{[g-1]} \ar[r] \ar[dr]_{\mathfrak{c}} & \mathrm{Bl}_{\Delta}\mathcal{X}^{(g-1)}\ar[d]\\   & \mathcal{X}^{(g-1)}\,, }
\end{equation*}
where $\mathfrak{c}$ is the \emph{cycle map} given by $A\in \mathcal{X}^{[g-1]}\longmapsto\displaystyle \sum_{p\in \mathcal{X}} \mathrm{length}_p(A)\,p\in \mathcal{X}^{(g-1)}$, and $\mathrm{Bl}_{\Delta}\mathcal{X}^{(g-1)}$ is the blow up of $\mathcal{X}^{(g-1)}$ along the discriminant locus $\Delta$ described by non-reduced $0$-cycles (cf. also \cite[Theorem 2.1]{Ra5}).

We define the cycle $\mathcal{Y}\subset\mathcal{X}^{(g-1)}$ parameterizing $0$-cycles cut out by $M^{\prime}$ on the curves $X_t$, that is
\begin{displaymath}
\mathcal{Y}:=\left\{\left.\sum_i a_i p_i\in X_t^{(g-1)}\right| t\in T,\, p_i\in X_t,\, a_i:=\mult_{p_i}(X_t,M^{\prime})\right\}.
\end{displaymath}
So $\mathcal{Y}$ is the image of a section of the map $\mathcal{X}^{(g-1)}\stackrel{\Psi}{\longrightarrow}  T$.
Let $\widetilde{\mathcal{Y}}\subset \mathcal{X}^{[g-1]}$ be the pullback of the strict transform of $\mathcal{Y}$ in $\mathrm{Bl}_{\Delta}\mathcal{X}^{(g-1)}$, and let $\widetilde{\Delta}_{(g-1)}\subset \mathcal{X}^{[g-1]}$ be the locus parameterizing $0$-dimensional schemes supported at a single point.
By \cite[Section 2.1]{Ra4}, the locus $\widetilde{\Delta}_{(g-1)}$ has dimension $\dim T + 1$, and it maps birationally on the relative small diagonal ${\Delta}_{(g-1)}:=\left\{\left.(g-1)q\in X_t^{(g-1)}\right|t\in T\right\}\subset \mathcal{X}^{(g-1)}$.
In particular, the restriction $\mathfrak{c}_{|\widetilde{\Delta}_{(g-1)}}\colon \widetilde{\Delta}_{(g-1)}\longrightarrow \Delta_{(g-1)}$ is an isomorphism away from the nodes of the fibres of $\psi$, whereas if $q\in \Sing(X_t)$ for some $t\in T$, the preimage of $(g-1)q\in X_t^{(g-1)}$ consists of a chain of $g-2$ rational curves (see also \cite[Theorem 1]{Ra2}).

Therefore a point $q\in X_t$ is a limit subcanonical point cut out by $M^{\prime}$---i.e. $t=[X_t]$ lies on $Q^{r+1}_g(M^{\prime},q)$---if and only if $\mathfrak{c}^{-1}\left((g-1)q\right)\in\widetilde{\mathcal{Y}}\cap \widetilde{\Delta}_{(g-1)}$.
Since $\widetilde{\mathcal{Y}}$ maps finitely to its image under $\widetilde{\Psi}$, also $\widetilde{\mathcal{Y}}\cap \widetilde{\Delta}_{(g-1)}$ does.
Moreover, $0\in \widetilde{\Psi} \left(\widetilde{\mathcal{Y}}\cap \widetilde{\Delta}_{(g-1)}\right)\subset T$ as $p^{\prime}\in X_0$ is a limit subcanonical point of $X_0$ cut out by $M^{\prime}$.
Thus any irreducible component $Z^{\prime}\subset Q^{r+1}_{g}\left(M^{\prime}\right)$ passing through $0$ has dimension bounded by
\begin{equation}\label{equation DIM Q^r_g(M')}
\begin{array}{ll}
\dim Z^{\prime} & \geq \dim \widetilde{\mathcal{Y}} + \dim \widetilde{\Delta}_{(g-1)} - \dim \mathcal{X}^{[g-1]} = \dim T + \left(\dim T + 1\right) - \left(\dim T + g - 1\right) =\\
 & \displaystyle= 3g-4+{r+3 \choose 2}-g+2=2g-2- \frac{r(r+1)}{2}+ (r+2)^2-r-1.
\end{array}
\end{equation}
Let us assume that $Z^{\prime}\subset Q^{r+1}_g\left(M^{\prime}\right)\cap \left(W^{r+1}_{g,\mathrm{sm}}\cup W^{r+1}_{g,h}\right)$ is an irreducible component containing $B$, and let $Z_g\subset Z^{\prime}$ be the---possibly empty---sublocus parameterizing smooth curves.
We recall that $B\subset Q^{r+1}_{g,h}\left(M^{\prime}\right)$ is an irreducible component of the intersection $Z^{\prime}\cap W^{r+1}_{g,h}$, and $\dim B=2g-2- \frac{r(r+1)}{2}+ (r+2)^2-r-2$.
By inequality \eqref{equation DIM Q^r_g(M')} we have $\dim Z^{\prime}\geq \dim B+1$, so that $Z_g\subset Q^{r+1}_{g,\mathrm{sm}}\left(M^{\prime}\right)$ is non-empty.
Furthermore, Proposition \ref{proposition W0rg} assures that $W^{r+1}_{g,h}$ is a divisorial component of $W^{r+1}_{g}$, and hence $\dim Z_g=\dim Z^{\prime}= \dim B+1$.

Hence we achieved the existence of an irreducible component $Z_{g(r+1)}=Z_g\subset Q^{r+1}_{g,\mathrm{sm}}\left(M^{\prime}\right)$ having the desired dimension.
It remains to check that its general point $[\Gamma]\in Z_{g}$ parameterizes a linearly normal curve $\Gamma\subset \mathbb{P}^{r+1}$ with $h^1(\Gamma,N_{\Gamma/\mathbb{P}^{r+1}})=0$.
However, these facts follow easily as $B$ lies in the Zariski closure of $Z_{g}\subset \Hilb^{r+1}_{g,g-1}$ , and the general point $[X]\in B$ parameterizes a linearly normal curve $X=C\cup E\subset \mathbb{P}^{r+1}$ with $h^1(X, N_{X/\mathbb{P}^{r+1}})=0$.
\end{proof}
\end{theorem}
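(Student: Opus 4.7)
The plan is to reduce the statement to producing, for every hyperplane $M\subset\mathbb{P}^r$, an irreducible component $Z_{g(r)}\subset Q^r_{g(r),\mathrm{sm}}(M)$ of dimension $2g(r)-2-\frac{r(r-1)}{2}+(r+1)^2-r$ whose general point parameterizes a non-degenerate linearly normal curve $C\subset\mathbb{P}^r$ with $h^1(C,N_{C/\mathbb{P}^r})=0$. Once this is in hand, Lemma \ref{lemma Q^r_g,sm(M)} transfers $Z_{g(r)}$ to an irreducible component $\mathcal{Z}_{g(r)}\subset\mathcal{G}^r_{g(r)}$ of dimension $2g(r)-1-\frac{r(r-1)}{2}$. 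Linear normality then gives $h^0(C,\mathcal{O}_C((g(r)-1)p))=h^0(C,\mathcal{O}_C(1))=r+1$, so both conclusions of the theorem follow.

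I would proceed by induction on $r$, with base case $r=2$. Here $g(2)=6$ and Example \ref{example G_6^2} provides the plane quintics with a $5$-fold inflection point: taking $M\subset\mathbb{P}^2$ a line and $Z_6\subset\Hilb^2_{6,5}$ the locus of smooth quintics meeting $M$ only at one point, one gets $\dim Z_6=10+7=16$, linear normality is immediate, and $h^1(C,N_{C/\mathbb{P}^2})=h^1(C,\mathcal{O}_C(5))=0$.

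For the inductive step, assume a component $Z_{g(r)}$ as above exists and fix a hyperplane $M'\subset\mathbb{P}^{r+1}$. Applying Lemma \ref{lemma DIMENSION LIMIT SUBCANONICAL} to $Z_{g(r)}$ produces an irreducible component $B\subset Q^{r+1}_{g,h}(M')$ (with $g=g(r+1)$) of dimension $2g-2-\frac{r(r+1)}{2}+(r+2)^2-r-2$, whose general point parameterizes a reducible linearly normal curve $X=C\cup E\subset\mathbb{P}^{r+1}$ with $h^1(X,N_{X/\mathbb{P}^{r+1}})=0$. The goal is now to show that a general $[X_0]\in B$ lies in the closure of a component $Z_{g(r+1)}\subset Q^{r+1}_{g,\mathrm{sm}}(M')$ of dimension one more than $\dim B$.

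The main obstacle, and the heart of the argument, is this smoothing step together with the dimension bound. I would work in an analytic neighborhood $T$ of $[X_0]$ in $W^{r+1}_{g,\mathrm{sm}}\cup W^{r+1}_{g,h}$, of dimension $\dim W^{r+1}_g=3g-4+\binom{r+3}{2}$ by Proposition \ref{proposition W^r_g,sm}, and study the relative symmetric product $\mathcal{X}^{(g-1)}\to T$ versus the relative Hilbert scheme $\mathcal{X}^{[g-1]}\to T$. By Ran's theory (and \cite[Section 2]{Ra3}, \cite[Theorem 2.1]{Ra5}) the latter is smooth of dimension $\dim T+g-1$, with cycle map $\mathfrak{c}\colon\mathcal{X}^{[g-1]}\to\mathcal{X}^{(g-1)}$ factoring through the blow-up along the non-reduced locus. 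Define $\mathcal{Y}\subset\mathcal{X}^{(g-1)}$ as the section-image picking out the length $g-1$ divisors cut by $M'$, and let $\widetilde{\mathcal{Y}}\subset\mathcal{X}^{[g-1]}$ be the pullback of its strict transform, while $\widetilde{\Delta}_{(g-1)}\subset\mathcal{X}^{[g-1]}$ is the locus of schemes supported at a single point (of dimension $\dim T+1$ by \cite[Section 2.1]{Ra4}). The property ``$q\in X_t$ is a limit subcanonical point cut by $M'$'' corresponds exactly to $\mathfrak{c}^{-1}((g-1)q)\in\widetilde{\mathcal{Y}}\cap\widetilde{\Delta}_{(g-1)}$. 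Since $\widetilde{\mathcal{Y}}\to T$ is finite and $0\in\widetilde{\Psi}(\widetilde{\mathcal{Y}}\cap\widetilde{\Delta}_{(g-1)})$, any irreducible component $Z'\subset Q^{r+1}_g(M')$ through $[X_0]$ satisfies
\begin{equation*}
\dim Z'\geq\dim\widetilde{\mathcal{Y}}+\dim\widetilde{\Delta}_{(g-1)}-\dim\mathcal{X}^{[g-1]}=2g-2-\tfrac{r(r+1)}{2}+(r+2)^2-r-1=\dim B+1.
\end{equation*}
Combining this with Proposition \ref{proposition W0rg}, which says $W^{r+1}_{g,h}$ is a divisor in $W^{r+1}_g$, forces any such $Z'$ to meet $W^{r+1}_{g,\mathrm{sm}}$ in a dense open subset, yielding the desired $Z_{g(r+1)}$ of dimension exactly $\dim B+1$. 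Linear normality and vanishing of $h^1(N)$ for the general $[\Gamma]\in Z_{g(r+1)}$ are inherited from the corresponding properties of $B$ proved in Lemma \ref{lemma DIMENSION LIMIT SUBCANONICAL} by semicontinuity, closing the induction.
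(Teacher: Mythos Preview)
Your proposal is correct and follows essentially the same approach as the paper's own proof: the reduction via Lemma~\ref{lemma Q^r_g,sm(M)}, the plane-quintic base case, the inductive construction of $B$ via Lemma~\ref{lemma DIMENSION LIMIT SUBCANONICAL}, the intersection argument in the smooth relative Hilbert scheme $\mathcal{X}^{[g-1]}$ using Ran's results, and the conclusion via Proposition~\ref{proposition W0rg} and semicontinuity are all identical in structure and detail. (Minor slip: in the base case you write $\dim Z_6=10+7=16$, but $(r+1)^2-1-r=6$ for $r=2$, so it should read $10+6=16$.)
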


\subsection{Proof of Theorem \ref{theorem EXP DIM}}\label{subsection PROOF OF EXP DIM}

In the light of the previous analysis, proving the sharpness of Theorem \ref{theorem BOUND} is now straightforward.

\begin{proof}[Proof of Theorem \ref{theorem EXP DIM}]
As in the statement of the theorem, we consider integers $r\geq 0$ and $g\geq g(r)$, where $g(r)$ is given by (\ref{equation g(r)}), together with a partition $\underline{k}=(k_1,\ldots,k_n)$ of $g-1$.
We are aimed at proving that the locus $\mathcal{G}^r_g(\underline{k})$ admits an irreducible component having expected dimension, whose general point $[C,p_1,\ldots,p_n]$ is such that $h^0\left(C, \mathcal{O}_C\left(\sum^n_{i=1} k_i p_i\right)\right)=r+1$ and---except for the cases $(r,g)=(0,2)$ and $(1,3)$---the curve $C$ is non-hyperelliptic.\\
The case $2\leq g\leq 3$ has been already studied in Example \ref{example LOW GENERA}, and Theorem \ref{theorem EXP DIM} holds under this assumption.
Then we set hereafter $g\geq 4$, and we initially focus on the case $\underline{k}=(g-1)$.\\
In the range $0\leq r\leq 3$, the assertion of Theorem \ref{theorem EXP DIM} on the locus $\mathcal{G}^r_g$ of subcanonical points is covered by Proposition \ref{proposition LOW DIMENSION}.\\
Hence we assume $r\geq 4$, so that $g(r)={r+2\choose 2}$.
By Theorems \ref{theorem SMOOTHING r} and \ref{theorem SMOOTHING g}, for any $g\geq g(r)$ there exists an irreducible component of $\mathcal{Z}_g\subset \mathcal{G}^r_g$ having expected dimension $2g-1-\frac{r(r-1)}{2}$, and such that $h^0\left(C, \mathcal{O}_C\left((g-1) p\right)\right)=r+1$ for general $[C,p]\in \mathcal{Z}_g$.
In particular, the curve $C$ is non-hyperelliptic: we have $\dim \mathcal{Z}_g\leq 2g-7$ for any $r\geq 4$, whereas hyperelliptic curves of genus $g$  just describe an irreducible component $\mathcal{G}_g^{\mathrm{hyp}}\subset \mathcal{G}_g^{r}$ of dimension $2g-1$, with $r\equiv \left\lfloor\frac{g-1}{2}\right\rfloor\,(\mathrm{mod}\,2)$ (see Example \ref{example HYPERELLIPTIC} and Section \ref{subsection SUBCANONICAL}).\\
Therefore Theorem \ref{theorem PARTITION} assures that for any partition $\underline{k}=(k_1,\ldots,k_n)$ of $g-1$, the locus $\mathcal{G}^r_g(\underline{k})$ admits an irreducible component $\mathcal{W}_g$ having expected dimension.
If moreover $[D,q_1,\ldots,q_n]\in \mathcal{W}_g$ is a general point, then neither $D$ is hyperelliptic, nor $h^0\left(D, \mathcal{O}_D\left(\sum^n_{i=1} k_i q_i\right)\right)>r+1$.
\end{proof}

\section*{Acknowledgements}

We would like to thank Enrico Arbarello, Andrea Bruno, Giulio Codogni, Edoardo Sernesi and Filippo Viviani for helpful discussion.

\end{document}